\documentclass[a4paper,12pt]{article}
\newcommand{\toukou}[1]{\ifx\TOUKOU\undefined\else{#1}\fi}%
\newcommand{\toukoudel}[1]{\ifx\TOUKOU\undefined{#1}\else\fi}%
\newcommand{\toukouchange}[2]{\ifx\TOUKOU\undefined{#1}\else{#2}\fi}%
\toukoudel{
\setlength{\oddsidemargin}{2mm}
\setlength{\evensidemargin}{2mm}
\setlength{\topmargin}{-8mm}
\setlength{\textwidth}{156mm}
\setlength{\textheight}{230mm}

}
\usepackage{amssymb}
\usepackage{amsmath}
\usepackage[dvipdfmx]{graphicx}
\usepackage{verbatim,enumerate}
\usepackage[active]{srcltx}
\usepackage[normalem]{ulem} 
\usepackage[noadjust]{cite}
\usepackage{times}
\usepackage{amsthm}
\newtheorem{theorem}{Theorem}[section]
\newtheorem{proposition}[theorem]{Proposition}
\newtheorem{fact*}{Fact}
\newtheorem{fact}[theorem]{Fact}

\newtheorem{corollary}[theorem]{Corollary}
\theoremstyle{definition}
\newtheorem{definition}[theorem]{Definition}

\newtheorem*{remark*}{Remark}

\newtheorem{example}[theorem]{Example}
\numberwithin{equation}{section}

\usepackage[usenames]{color}


\newcommand{\R}{\boldsymbol{R}}

\newcommand{\rank}{\operatorname{rank}}

\newcommand{\hess}{\operatorname{Hess}}

\renewcommand{\phi}{\varphi}
\newcommand{\sgn}{\operatorname{sgn}}
\newcommand{\inner}[2]{\left\langle{#1},{#2}\right\rangle}

\newcommand{\A}{\mathcal{A}}

\newcommand{\pmt}[1]{{\begin{pmatrix} #1  \end{pmatrix}}}

\newcommand{\mycomment}[1]{}
\makeatletter
  \newcommand{\subsubsubsection}{\@startsection{paragraph}{4}{\z@}%
    {-1ex \@plus -1ex \@minus -.2ex}%
    {1.0ex \@plus.2ex}
    {\reset@font\bfseries\normalsize}
  }
  \makeatother
  \setcounter{secnumdepth}{4}
\begin{document}
\toukouchange{
\begin{center}
{\large {\bf
Singularities of
a surface given by Kenmotsu-type formula in Euclidean three-space
}}
\\[2mm]
\today
\\[2mm]
\renewcommand{\thefootnote}{\fnsymbol{footnote}}
Luciana F. Martins,
Kentaro Saji
 and
Keisuke Teramoto
\footnote[0]{ 2010 Mathematics Subject classification. Primary
57R45 ; Secondary 53A05.}
\footnote[0]{Keywords and Phrases. Cuspidal edge, mean curvature}
\end{center}
}
{
\title{Singularities of
a surface given by Kenmotsu-type formula in Euclidean three-space
\thanks{Partly supported by the
JSPS KAKENHI Grant Numbers 18K03301 and 17J02151, and by grant 2018/19610-7, S\~ao Paulo Research Foundation (FAPESP).
This work was partially done during the first author's
stay at Department of Mathematics, Kobe University 2018,
by the support of the JSPS KAKENHI Grant Number 17H06127.
The authors would like to thank these supports.}
}
\titlerunning{Singularities of a surface given by Kenmotsu-type formula}
\author{Luciana F. Martins \and Kentaro Saji \and Keisuke Teramoto}
\authorrunning{L. Martins, K. Saji and K. Teramoto}
\institute{L. F. Martins \at
Departamento de Matem{\'a}tica,\\
Universidade Estadual Paulista (Unesp),\\
Instituto de Bioci\^{e}ncias, 
Letras e Ci\^{e}ncias Exatas,\\
 S\~{a}o Jos\'{e} do Rio Preto, \\
SP, Brazil\\
E-mail: {\tt luciana.martinsO\!\!\!aunesp.br}\\
\and
K. Saji \at
Department of Mathematics,\\
Graduate School of Science, \\
Kobe University, \\
Rokkodai 1-1, Nada, Kobe \\
657-8501, Japan\\
  E-mail: {\tt sajiO\!\!\!amath.kobe-u.ac.jp}\\
\and 
K. Teramoto \at
Institute of Mathematics for Industry, \\
Kyushu University, \\
Motooka 744, Nishi-ku, Fukuoka \\
819-0395, Japan\\
E-mail: {\tt k-teramotoO\!\!\!aimi.kyushu-u.ac.jp}\\
}
\date{Received: date / Accepted: date}
\maketitle
}

\begin{abstract}
We study singularities of surfaces
which are given by Kenmotsu-type formula
with prescribed unbounded mean curvature.
\toukou{\keywords{Singularities \and Prescribed mean curvature}
\subclass{57R45 \and 53A10}}
\end{abstract}
\section{Introduction}
In \cite{k3}, Kenmotsu gave a formula
which describes 
immersed surfaces in the Euclidean $3$-space
by prescribed mean curvature and Gauss map.
Furthermore, it is generalized to the
Lorentz-Minkowski $3$-space (\cite{an,m}).
These formulas are considered important
in Euclidean or Lorentzian surface theory
since  they are similar to the  Weierstrass representation for minimal surfaces. 
A coordinate free formula of these formulas is
obtained by Kokubu (\cite{koku}), unifying them in a single one, 
which is called {\it Kenmotsu-type formula}.

On the other hand, in the recent decades, there are several articles
concerning differential geometry of singular curves and surfaces,
namely, curves and surfaces with singular points, in the $2$ and $3$
dimensional Euclidean spaces
\cite{bw,fh,ft,hnuy,hnuy2,irrf,MSUY,nuy,OT,front}. Especially,
wave fronts (fronts) is a class of surfaces with
singularities, where the unit normal vector is well-defined even on
the set of singular points. If a surface $f$ has a singularity, the
mean curvature $H$ may diverge. In the case that $f$ is a front,
behaviors, in particular boundedness of $H$ near singular points
are investigated in \cite{MSUY} and \cite{front}.

It is natural to expect that there is a formula
which describes singular surfaces
by unbounded mean curvature and Gauss map.
In this work, we construct
a Kenmotsu-type formula for singular surfaces
of prescribed
unbounded mean curvature and Gauss map
by a little modification of Kokubu's formula 
(see Theorem \ref{thm:construction}).
Furthermore, we study singularities
of these surfaces and their geometric invariants.


\section{Fronts, their geometric invariants and Kossowski metric}


\subsection{Fronts and their mean curvatures}

A map-germ $f:(\R^2,p)\to(\R^3,0)$ at $p$ is a called {\it frontal\/}
if there exists a map (called the {\it unit normal vector field\/})
$\nu:(\R^2,p)\to(\R^3,0)$
satisfying $|\nu|=1$ and $\inner{df(X)}{\nu}=0$ holds identically,
where $\inner{~}{~}$ is the Euclidean inner product on $\R^3$.
A frontal is a {\it front\/} if the pair $(f,\nu)$ is
an immersion.
We call the function
$$
\det(f_u, f_v,\nu)
$$
the {\it signed area density function},
and a function is called {\it singularity identifier\/}
if it is a non-zero functional multiple of the signed area density function.
A {\it cuspidal edge\/} is a map-germ $(\R^2,p)\to(\R^3,0)$ at $p$
which is ${\cal A}$-equivalent to the map-germ
$(u,v)\mapsto(u,v^2,v^3)$ at $(0,0)$.
A map-germ $(\R^2,p)\to(\R^3,0)$
which is ${\cal A}$-equivalent to
$(u,v)\mapsto(u,4v^3+2uv,3v^2+uv^2)$
(respectively, $(u,v)\mapsto(u,5v^4+2uv,4v^5+uv^2-u^2)$)
is called a {\it swallowtail}
(respectively, a {\it cuspidal butterfly\/}).
Furthermore, a map-germ which is $\A$-equivalent to
$(u,v)\mapsto(u,-2v^3+u^2v,3v^4-u^2v^2)$
(respectively $(u,v)\mapsto(u,-2v^3-u^2v,3v^4-u^2v^2)$)
is called a {\it cuspidal lips\/} (respectively, {\it cuspidal beaks\/}).
These map-germs are fronts
and it is known that the generic singularities of fronts
are cuspidal edges and swallowtails.
Furthermore, cuspidal butterflies, cuspidal lips and beaks
in addition above two are
the generic singularities of one-parameter 
families of fronts (\cite{AGV}).

Let $f:(\R^2,p)\to(\R^3,0)$ be a front-germ and
$\nu$ the unit normal vector field.
The set of singular points of $f$ is denoted by $S(f)$.
A singular point $p$ of $f$ is said to be of {\it corank one\/}
if $\rank df_p=1$.
Let $p$ be a corank one singular point of a front $f$.
Then there exists a non-zero vector field $\eta$
near $p$ such that $\eta(q)$ generates
the kernel of $df_q$ for any $q\in S(f)$.
We call $\eta$ the {\it null vector field}.
A singular point $p$ of $f$ is
{\it non-degenerate\/} if $d\lambda_p\ne0$,
where $\lambda$ is a singularity identifier.
Otherwise, it is {\it degenerate}.
Let $p$ be a non-degenerate singular point.
Then there exists a regular curve-germ
$\gamma:(\R,0)\to (\R^2,p)$ such that
the image of $\gamma$ coincides with $S(f)$.
One can easily see that a non-degenerate singular point
is corank one.
Thus the restriction of the non-zero vector field $\eta|_{S(f)}$
near $p$ can be parameterized
by the parameter $t$ of $\gamma$.
We set
\begin{equation}\label{eq:phi}
\phi(t)=\det(\gamma'(t),\eta(t)).
\end{equation}
A non-degenerate singular point is said to be of the
{\it first kind\/} if $\phi(0)\ne0$, and
a non-degenerate singular point is said to be of the
$k$-{\it th kind\/} if $\phi(0)=0$, $\phi^{(j)}(0)=0$ $(j=1,\ldots,k-2)$,
and $\phi^{(k-1)}(0)\ne0$.
Then the following fact holds.
\begin{fact}[{\cite[Proposition 1.3]{krsuy},\cite[Corollary A.9]{is}}]
\label{fact:krsuy}
Let\/ $p$ be a non-degene\-rate singular point of
a front\/ $f$.
Then
\begin{itemize}
\item[\rm (i)] $f$ at\/ $p$ is a cuspidal edge if and only if\/
$p$ is a singular point of the first kind.
\item[\rm (ii)]  $f$ at\/ $p$ is a swallowtail if and only if\/
$p$ is a singular point of the second kind.
\item[\rm (iii)]  $f$ at\/ $p$ is a cuspidal butterfly if and only if\/
$p$ is a singular point of the third kind.
\end{itemize}
\end{fact}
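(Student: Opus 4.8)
The plan is to reduce each of the three equivalences to the known recognition criteria for cuspidal edges, swallowtails and cuspidal butterflies from \cite{krsuy} and \cite{is}, and then to translate those criteria, which are phrased in terms of the singularity identifier $\lambda$ and the null vector field $\eta$, into the conditions on $\phi$ from \eqref{eq:phi}. The bridge between the two formulations is one computation: along the singular curve $\gamma$ one has
\[
(\eta\lambda)\bigl(\gamma(t)\bigr)=c(t)\,\phi(t),
\]
where $\eta\lambda=d\lambda(\eta)$ and $c$ is a smooth function with $c(0)\neq0$. To see this I would write $\eta(\gamma(t))=a(t)\gamma'(t)+b(t)n(t)$ with $n$ a field transverse to $\gamma'$. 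Since $\gamma$ parametrizes $S(f)=\{\lambda=0\}$ we have $d\lambda(\gamma')\equiv0$, so $(\eta\lambda)(\gamma(t))=b(t)\,d\lambda(n(t))$, while $\phi(t)=b(t)\det(\gamma'(t),n(t))$. Non-degeneracy ($d\lambda_p\neq0$, so $\gamma'(0)$ spans $\Ker d\lambda_p$, hence $d\lambda(n)\neq0$) makes $c=d\lambda(n)/\det(\gamma',n)$ smooth and nonvanishing near $t=0$.

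First I would treat (i). The cuspidal-edge criterion says that a non-degenerate singular point is a cuspidal edge exactly when $\eta$ is transverse to $S(f)$, i.e.\ $\eta\lambda(p)\neq0$. By the bridge identity this is equivalent to $\phi(0)\neq0$, which is precisely the defining condition of a singular point of the first kind.

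For (ii) and (iii) I would differentiate the bridge identity along $\gamma$. The swallowtail criterion adds to $\eta\lambda(p)=0$ the requirement that $d\lambda_p$ and $d(\eta\lambda)_p$ be linearly independent, equivalently $\tfrac{d}{dt}(\eta\lambda)(\gamma(t))|_{t=0}\neq0$; since $\phi(0)=0$, Leibniz's rule gives $\tfrac{d}{dt}(c\phi)|_{0}=c(0)\phi'(0)$, so the criterion becomes $\phi(0)=0$, $\phi'(0)\neq0$, i.e.\ the second kind. The cuspidal-butterfly criterion adds one further order, $\eta\lambda(p)=0$, $\tfrac{d}{dt}(\eta\lambda\circ\gamma)(0)=0$ and $\tfrac{d^2}{dt^2}(\eta\lambda\circ\gamma)(0)\neq0$; because now $\phi(0)=\phi'(0)=0$, the only surviving term is $\tfrac{d^2}{dt^2}(c\phi)|_{0}=c(0)\phi''(0)$, giving $\phi(0)=\phi'(0)=0$, $\phi''(0)\neq0$, i.e.\ the third kind. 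At each order only the top derivative of $\phi$ survives, since every lower-order factor is annihilated by the vanishing of the lower derivatives.

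The hard part will be the bridge identity together with checking that the higher-order forms of the swallowtail and butterfly criteria in \cite{krsuy} and \cite{is} really coincide with the derivative-along-$\gamma$ conditions used above: some formulations instead iterate an extension of the null field $\eta$ to a neighborhood, whose value off the curve enters the computation. Reconciling those formulations, and verifying that the extra non-degeneracy hypotheses built into \cite[Corollary A.9]{is} for the butterfly hold automatically under the present front assumptions, is where the genuine care is needed; once that is in place, the reduction to conditions on $\phi$ is the routine Leibniz calculation sketched above.
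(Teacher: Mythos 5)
Your proof is correct, but it takes a genuinely different route from the paper, for a simple reason: the paper offers no proof of this Fact at all --- it is imported from \cite{krsuy} and \cite{is}, and the definitions of singular points of the first, second and third kind via $\phi$ in \eqref{eq:phi} are set up precisely so that the cited statements apply verbatim. Indeed, \cite[Proposition 1.3]{krsuy} already phrases the cuspidal edge and swallowtail criteria in terms of $\det(\gamma'(t),\eta(t))$ (cuspidal edge iff $\eta(p)$ is transverse to $\gamma'(0)$, i.e.\ $\phi(0)\neq0$; swallowtail iff $\phi(0)=0$ and $\phi'(0)\neq0$), and \cite[Corollary A.9]{is} continues at the next order for the cuspidal butterfly; so with the paper's definitions the Fact is essentially a restatement. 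What your argument genuinely establishes is the equivalence of this $\phi$-formulation with the directional-derivative formulation of the criteria ($\eta\lambda(p)\neq0$; then $\eta\lambda(p)=0$, $\eta\eta\lambda(p)\neq0$; and so on --- the style used, for beaks, in Fact \ref{fact:ist} of the paper), and that part is done correctly: the bridge identity $(\eta\lambda)(\gamma(t))=c(t)\phi(t)$ with $c(0)\neq0$ follows exactly as you derive it; your identification of $\eta\eta\lambda(p)\neq0$ with $d(\eta\lambda)_p(\gamma'(0))\neq0$ is right because $\eta\lambda(p)=0$ forces $\eta(p)\in\Ker d\lambda_p=\spann{\gamma'(0)}$; and the Leibniz computation showing only the top derivative of $\phi$ survives is correct. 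The ``hard part'' you defer --- independence of the higher-order conditions from the choice of extension of $\eta$ off $S(f)$ --- is in fact a one-line check: since $d\lambda_p\neq0$, the function $\lambda$ generates the ideal of functions vanishing on $S(f)$, so two extensions differ by $\lambda X$ for some vector field $X$, and under the previously imposed vanishing conditions the values $\eta\eta\lambda(p)$, $\eta\eta\eta\lambda(p)$ are unchanged; alternatively, note that your derivatives of $\eta\lambda$ along $\gamma$ never use $\eta$ off the singular curve at all. In sum: your route buys an honest proof of the equivalence between the two standard formulations of the recognition criteria, at the cost of attributing to the cited sources a form of the criteria they do not quite use; the paper buys brevity by defining the $k$-th kind so that the citations apply word for word.
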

On the other hand, cuspidal lips and beaks are examples which
are degenerate singularities.
The following fact holds.
\begin{fact}[{\cite[Theorem A.1]{ist}}]\label{fact:ist}
Let\/ $p$ be a corank one singular point of
a front\/ $f$.
Then
\begin{itemize}
\item[\rm (i)]  $f$ at\/ $p$ is a cuspidal lips if and only if\/
$\det \hess \lambda \, (p)>0$.
\item[\rm (ii)]  $f$ at\/ $p$ is a cuspidal beaks if and only if\/
$\det \hess \lambda \, (p)<0$ and\/ $\eta\eta\lambda(p)\ne0$,
\end{itemize}
where\/ $\eta$ is a null-vector field,
and\/ $\lambda$ is a singularity identifier.
\end{fact}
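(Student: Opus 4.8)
The plan is to normalise the germ along the null direction, read the local shape of the singular set off the $2$-jet of the identifier $\lambda$, and then build an explicit $\A$-equivalence onto the two normal forms, the sign of $\det\hess\lambda(p)$ being what separates the cases. First I would fix coordinates $(u,v)$ centred at $p$ with the null vector field equal to $\partial_v$; this is possible because $p$ is of corank one, and it gives $f_v(p)=\zv$, while the front condition forces $\nu_v(p)\neq\zv$. Composing with an isometry of $\R^3$ I may further assume $f(p)=\zv$, $f_u(p)=(1,0,0)$ and $\nu(p)=(0,0,1)$. In these coordinates $\lambda=\det(f_u,f_v,\nu)$ is a singularity identifier with $S(f)=\lambda^{-1}(0)$. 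Since cuspidal lips and cuspidal beaks are degenerate singular points, the first thing to record is that $d\lambda(p)=0$, so that $p$ is a critical point of $\lambda$ and the leading invariant is the Hessian $\hess\lambda(p)$; note that multiplying $\lambda$ by a nonvanishing function, changing source coordinates, or composing with an orientation-preserving ambient diffeomorphism all rescale $\det\hess\lambda(p)$ by a positive factor, so its sign is a genuine invariant of the front-germ.

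Next I would extract the structure of $S(f)$ from the $2$-jet of $\lambda$ by the Morse lemma: when $\det\hess\lambda(p)>0$ the Hessian is definite and $\lambda^{-1}(0)$ collapses to the single point $p$, whereas when $\det\hess\lambda(p)<0$ it is indefinite and $S(f)$ is a pair of regular curves meeting transversally at $p$, whose tangents are the two null directions of $\hess\lambda(p)$. In this indefinite case the quantity $\eta\eta\lambda(p)=\hess\lambda(p)(\eta,\eta)$ (which equals the Hessian quadratic form on $\eta$ because the first derivatives of $\lambda$ vanish at $p$) is nonzero exactly when the null direction $\eta(p)$ is transverse to both branches of $S(f)$, and this transversality is the genericity condition that prevents a more degenerate germ. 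Comparing these two local pictures with the two listed normal forms — concretely, by computing $d\lambda$, $\hess\lambda$ and $\eta\eta\lambda$ directly on $(u,-2v^3\pm u^2v,3v^4-u^2v^2)$ — pins down which sign corresponds to cuspidal lips and which to cuspidal beaks, and this settles the necessity in both (i) and (ii).

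For sufficiency I would construct the $\A$-equivalence to the normal form. Using $f_u(p)\neq\zv$ I can take $u$ as the first target coordinate, writing $f=(u,f^2,f^3)$ with $f^2,f^3$ vanishing to high order in $v$ at $p$; a suitable reparametrisation of the source (adapted to $S(f)$ in each case) followed by target diffeomorphisms then removes the higher-order terms, the $2$-jet data of $\lambda$ fixing the leading $v^3$ and $u^2v$ coefficients up to the sign dictated by $\det\hess\lambda(p)$. The main obstacle is precisely this sufficiency step: one must verify that the Hessian sign condition actually forces $\A$-equivalence to the normal form, which requires a finite-determinacy or division argument guaranteeing that every higher-order term is killable by admissible changes, together with enough care that the smooth unit normal $\nu$ — and hence the front structure — is preserved throughout. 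In the beaks case it is the hypothesis $\eta\eta\lambda(p)\neq0$ that makes the relevant Jacobian nondegenerate and lets the inductive elimination close up.
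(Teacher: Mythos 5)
You are not competing with an in-paper argument here: the paper states this as a Fact imported verbatim from \cite[Theorem A.1]{ist} and gives no proof, so your proposal must be measured against the cited source. Your necessity half is essentially the standard argument and is sound: with $\lambda(p)=0$ and $d\lambda_p=0$, replacing $\lambda$ by $\mu\lambda$ ($\mu(p)\neq0$) gives $\hess(\mu\lambda)(p)=\mu(p)\hess\lambda(p)$, and a source change with Jacobian $J$ acts by congruence, so $\det\hess\lambda(p)$ scales by $\mu(p)^2(\det J)^2>0$ and $\eta\eta\lambda(p)=\hess\lambda(p)(\eta,\eta)$ by nonzero factors; the Morse-lemma reading of $S(f)$ and the identification of $\eta\eta\lambda(p)\neq0$ with transversality of $\eta(p)$ to the two branches are correct. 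Two caveats, though. First, your calibration step, run verbatim on the normal forms as printed in this paper, yields the swapped correspondence: the germ labelled cuspidal lips, $(u,-2v^3+u^2v,3v^4-u^2v^2)$, has $S(f)=\{u^2=6v^2\}$, two transversally intersecting lines (indefinite Hessian), while the germ labelled cuspidal beaks has $S(f)=\{(0,0)\}$; the $\pm u^2v$ signs are interchanged in the paper, and you must use corrected models. Second, you assert $d\lambda_p=0$ because lips/beaks are degenerate; that is fine for necessity, but for the ``if'' directions note that in (i) the hypothesis $\det\hess\lambda(p)>0$ by itself forces $d\lambda_p=0$ (if $d\lambda_p\neq0$, write $\lambda=v\mu$ in coordinates with $S(f)=\{v=0\}$, and then $\det\hess\lambda(p)=-\mu_u(p)^2\le0$), whereas in (ii) degeneracy must be added as a hypothesis, as in the original statement in \cite{ist}: a cuspidal edge with identifier $\lambda=v(1+u)e^{v}$ and $\eta=\partial_v$ satisfies $\det\hess\lambda(0)=-1<0$ and $\eta\eta\lambda(0)=2\neq0$, so (ii) as quoted is false without $d\lambda_p=0$.

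The genuine gap is the sufficiency half, which you flag but do not close, and it is the entire content of the theorem. The sentence ``a suitable reparametrisation of the source \ldots{} followed by target diffeomorphisms then removes the higher-order terms'' is precisely the recognition problem for these germs, and nothing in your sketch makes it work: you supply no finite $\mathcal{A}$-determinacy degree for the lips/beaks models, no tangent-space (Mather/complete-transversal) computation showing that each higher-order term is absorbable, and no division argument. Moreover, the step cannot be reduced to jet-matching of $\lambda$ alone: $\hess\lambda(p)$ and $\eta\eta\lambda(p)$ determine low-order data of one scalar invariant of $f$, not a jet of $f$ itself, so ``the $2$-jet data of $\lambda$ fixing the leading coefficients'' does not bound the $\mathcal{A}$-orbit, and one must genuinely prove that every front-germ with the stated $\lambda$-data lands in the model orbit. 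This is exactly where \cite{ist} does its real work, in a dedicated appendix: the front structure is used to normalize $\nu$, the Morse lemma is applied to $\lambda$ to normalize the singular set, and a bona fide recognition argument replaces naive term-killing. As it stands, your proposal establishes the ``only if'' statements (after correcting the printed models) and leaves the ``if'' statements unproved.
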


For a front $f$,
the mean curvature $H$
diverges near non-degenerate singular points.
Furthermore, the following fact is known.
\begin{fact}[{\cite[Propositions 3.8, 3.11, 4.2 and (4.6)]{MSUY}}]
Let\/ $p$ be a non-dege\-nerate singular point of
a front\/ $f$.
Then\/ $\hat H=\lambda H$ is a\/ $C^\infty$-function,
and\/ $\hat H(p)\ne0$, where\/ $\lambda$ is a singularity
identifier.
\end{fact}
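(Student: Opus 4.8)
Write $\Lambda=\det(f_u,f_v,\nu)$ for the signed area density. Since any singularity identifier has the form $\lambda=\rho\Lambda$ with $\rho$ a nowhere-vanishing smooth function, and multiplication by $\rho$ preserves both $C^\infty$-smoothness and non-vanishing at $p$ (note also that at the singular point $\Lambda(p)=0$, so $d\lambda_p\ne0$ is equivalent to $d\Lambda_p\ne0$), it suffices to prove that $\hat H=\Lambda H$ extends to a $C^\infty$-function with $\hat H(p)\ne0$. The plan is then to compute in a coordinate system adapted to the singular point. I would choose adapted coordinates $(u,v)$ centred at $p$ so that $S(f)=\{v=0\}$ and the null vector field is $\eta=\partial_v$ along $S(f)$; since $df(\partial_v)=f_v$ lies in $\Ker df$ along the singular set, this forces $f_v(u,0)=\zv$. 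By Hadamard's lemma $f_v=v\,h$ for a smooth $h$ with $h(u,0)=f_{vv}(u,0)$, whence $\Lambda=v\,\Lambda_0$ with $\Lambda_0(u,0)=\det(f_u,f_{vv},\nu)(u,0)$. A short computation gives $d\Lambda_p=\Lambda_0(p)\,dv$, so the non-degeneracy hypothesis is exactly $\Lambda_0(p)\ne0$.

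For smoothness I would write the mean curvature as $H=(EN-2FM+GL)/\bigl(2(EG-F^2)\bigr)$ and use $EG-F^2=\Lambda^2$ to get $\hat H=\Lambda H=(EN-2FM+GL)/(2\Lambda)$. The decisive structural input is the front relation $\inner{f_v}{\nu}=0$: differentiating it in $v$ yields $N=\inner{f_{vv}}{\nu}=-\inner{f_v}{\nu_v}$, so $N(u,0)=0$; similarly $f_{uv}(u,0)=\partial_u f_v(u,0)=\zv$ gives $M(u,0)=0$, while $F(u,0)=\inner{f_u}{f_v}(u,0)=0$ and $G(u,0)=|f_v(u,0)|^2=0$. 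Hence the numerator $EN-2FM+GL$ vanishes on $\{v=0\}$ and is divisible by $v$; cancelling this common factor against $\Lambda=v\Lambda_0$ shows $\hat H=(EN-2FM+GL)/(2v\Lambda_0)$ is $C^\infty$ near $p$, since $\Lambda_0(p)\ne0$.

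It remains to evaluate $\hat H(p)$, and this is the step I expect to be the main obstacle. Since the numerator vanishes on $\{v=0\}$ and $G_v(u,0)=2\inner{f_v}{f_{vv}}(u,0)=0$, differentiating the numerator in $v$ and evaluating at $p$ kills every term except one, giving $\hat H(p)=E(p)\,N_v(p)/\bigl(2\Lambda_0(p)\bigr)$ with $N_v(p)=-\inner{f_{vv}(p)}{\nu_v(p)}$. To see $N_v(p)\ne0$ I would pin down the direction of $\nu_v(p)$: the front condition together with $f_v(p)=\zv$ forces $\nu_v(p)\ne\zv$; differentiating $\inner{\nu}{\nu}=1$ and $\inner{f_u}{\nu}=0$ in $v$ (using $f_{uv}(p)=\zv$) gives $\inner{\nu_v}{\nu}(p)=\inner{f_u}{\nu_v}(p)=0$, so $\nu_v(p)$ is a non-zero multiple of $f_u(p)\times\nu(p)$, say $\nu_v(p)=c\,\bigl(f_u(p)\times\nu(p)\bigr)$ with $c\ne0$. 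Then $\inner{f_{vv}(p)}{\nu_v(p)}=c\,\det(f_{vv},f_u,\nu)(p)=-c\,\Lambda_0(p)$, which is non-zero precisely because of the non-degeneracy established above. Thus $N_v(p)=c\,\Lambda_0(p)$ and $\hat H(p)=c\,E(p)/2\ne0$, completing the argument. The only delicate point is keeping track of which derivatives survive at $p$ and confirming that the front condition, the non-degeneracy condition and the adapted frame combine to force $\nu_v(p)$ into the single direction $f_u(p)\times\nu(p)$.
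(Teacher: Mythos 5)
Your computation is correct as far as it goes, but it proves the Fact only for non-degenerate singular points of the \emph{first kind}, whereas the statement covers every non-degenerate singular point, including the second and third kinds (swallowtails and cuspidal butterflies, cf.\ Fact \ref{fact:krsuy}). The gap is in your very first step: a coordinate system with $S(f)=\{v=0\}$ and null vector field $\eta=\partial_v$ along all of $S(f)$ (a strongly adapted system in the sense of Definition \ref{def:coord01}) exists precisely when the function $\phi(t)=\det(\gamma'(t),\eta(t))$ of \eqref{eq:phi} is non-vanishing, i.e., when $p$ and all nearby singular points are of the first kind. At a point of the $k$-th kind with $k\ge2$ one has $\phi(0)=0$, so $\eta(p)$ is proportional to $\gamma'(0)$ and hence \emph{tangent} to $\{v=0\}$; in any $u$-singular coordinate system this forces $f_u(p)=\zv$ while $f_v(p)\ne\zv$. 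Then everything you derive from $f_v(u,0)=\zv$ fails: $F$, $G$, $M$, $N$ no longer vanish along the singular curve, so your divisibility of the numerator $EN-2FM+GL$ by $v$ is not established by the argument as written (it is still true, but one must instead use the pointwise kernel relation $a f_u+b f_v=\zv$ on $S(f)$, where $\eta=a\partial_u+b\partial_v$, to show the numerator vanishes on $\{v=0\}$), and, more seriously, your evaluation $\hat H(p)=E(p)N_v(p)/(2\Lambda_0(p))$ and the identification of $\nu_v(p)$ as a non-zero multiple of $f_u(p)\times\nu(p)$ make no sense when $f_u(p)=\zv$, $E(p)=0$.

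This split is exactly why the paper offers no proof and cites \cite[Propositions 3.8, 3.11, 4.2 and (4.6)]{MSUY}: Propositions 3.8 and 3.11 there handle the first kind, and your argument is essentially a correct reconstruction of that half (the same strongly adapted coordinate computation that underlies Proposition \ref{prop:invariants} of this paper); Proposition 4.2 and (4.6) treat the second kind by a genuinely different computation, in which the non-vanishing of $\hat H(p)$ is tied to the normalized cuspidal curvature $\mu_c$ of \eqref{eq:muc} --- using that the front condition forces $d\nu_p(\eta)\ne0$ when $df_p(\eta)=0$ --- rather than to $\det(f_u,f_{vv},\nu)(p)$, which is the quantity your proof hinges on. To complete your proof you would need to add this $k\ge2$ case: in a $u$-singular system, prove smoothness via the kernel relation above, and then evaluate $\hat H(p)$ afresh with $f_u(p)=\zv$, $\nu_u(p)\ne\zv$. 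Note also that the paper's own proved Proposition (the degenerate corank-one case) takes yet another route, via the graph normal form $f=(u,f_2,f_3)$ and the identifier $q=(f_2)_v$, which avoids choosing a null coordinate at all.
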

For degenerate singularities with some conditions,
which include the cuspidal lips and beaks, we have the same
statement.

\begin{proposition}
Let\/ $p$ be a corank one singular point of
a front\/ $f$, satisfying\/
$d\lambda_p=0$,
where\/ $\lambda$ is a singularity identifier.
Then\/ $\hat H=\lambda H$ is a\/ $C^\infty$-function near\/ $p$,
and\/ $\hat H(p)\ne0$.
\end{proposition}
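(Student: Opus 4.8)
The plan is to trace the divergence of $H$ entirely to the vanishing of $\lambda$ in its denominator, and then to show that $\lambda$ divides the numerator of $H$ exactly once, with a quotient that is controllable at $p$. Since any singularity identifier is a nowhere-zero multiple of the signed area density, I may assume $\lambda=\det(f_u,f_v,\nu)$; multiplication by a nonvanishing smooth function affects neither smoothness nor the nonvanishing at $p$. Because $p$ has corank one, I first choose coordinates $(u,v)$ with $\partial_v|_p$ spanning $\Ker df_p$, so that $f_v(p)=\zv$ and $f_u(p)\ne\zv$. Differentiating the identities $\inner{f_u}{\nu}=\inner{f_v}{\nu}=0$ and $\inner{\nu}{\nu}=1$ shows that all four vectors $f_u,f_v,\nu_u,\nu_v$ are orthogonal to $\nu$, and yields the symmetry $\inner{f_u}{\nu_v}=\inner{f_v}{\nu_u}$.

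Next I would pick a smooth oriented orthonormal frame $e_1,e_2$ of the plane field $\nu^{\perp}$ near $p$ (available since $\nu$ is smooth and unit), and record the coefficients of $f_u,f_v$ (resp.\ $\nu_u,\nu_v$) with respect to $e_1,e_2$ as the columns of a $2\times2$ matrix $A$ (resp.\ $B$). Writing $\mathrm{I},\mathrm{II}$ for the first and second fundamental matrices, one then has $\mathrm{I}=\trans{A}A$, $\mathrm{II}=-\trans{A}B$ and $\lambda=\det A$. Since the numerator of $H$ equals $\trace(\mathrm{I}\operatorname{adj}\mathrm{II})$, a short manipulation using $\operatorname{adj}(\trans{A}B)=\operatorname{adj}(B)\operatorname{adj}(\trans{A})$ and $\operatorname{adj}(\trans{A})\trans{A}=(\det A)\,\mathrm{id}$ gives
\[
EN-2FM+GL=-\lambda\,\trace\!\big(A\operatorname{adj}(B)\big).
\]
On the regular set this yields $\hat H=\lambda H=-\tfrac12\trace\big(A\operatorname{adj}(B)\big)$, and the right-hand side is manifestly $C^{\infty}$ on the whole neighborhood; hence it is the desired smooth extension of $\hat H$ across $p$.

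Finally I would evaluate at $p$. Expanding the trace gives $\trace\big(A\operatorname{adj}(B)\big)=\det(f_u,\nu_v,\nu)+\det(\nu_u,f_v,\nu)$, whose second term vanishes at $p$ because $f_v(p)=\zv$, so $\hat H(p)=-\tfrac12\det(f_u,\nu_v,\nu)(p)$. The front condition forces $\nu_v(p)\ne\zv$ (otherwise $(f_v,\nu_v)(p)=\zv$ and $(f,\nu)$ would fail to be an immersion), and the symmetry relation at $p$ gives $\inner{f_u}{\nu_v}(p)=\inner{f_v}{\nu_u}(p)=0$; thus $f_u(p)$ and $\nu_v(p)$ are nonzero and orthogonal, hence together with $\nu(p)$ form a basis of $\R^3$, so $\det(f_u,\nu_v,\nu)(p)\ne0$ and $\hat H(p)\ne0$. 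The main obstacle is the exact divisibility step: one must recognize that the numerator carries precisely one factor of $\lambda$ and identify the quotient in a form—here $\trace(A\operatorname{adj}(B))$—whose value at $p$ is computable. Once $f_u,f_v,\nu_u,\nu_v$ are placed in $\nu^{\perp}$ and the adjugate identities are invoked, this reduces to a routine $2\times2$ computation; notably, the hypothesis $d\lambda_p=0$ is never used, so the same argument simultaneously recovers the non-degenerate case.
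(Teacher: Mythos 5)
Your proof is correct, and it takes a genuinely different route from the paper's. The paper argues in adapted graph coordinates: it writes $f=(u,f_2(u,v),f_3(u,v))$ with $\partial_v$ null, rotates so that $\nu(p)=(0,0,1)$, factors $f_v=q\,g$ with $q=(f_2)_v$ and $g=(0,1,l)$ nonvanishing, observes $\det(f_u,f_v,\nu)=q\det(f_u,g,\nu)$ so that $q$ is a singularity identifier, and then reads off smoothness of $qH$ from an explicit expression for $H$ whose denominator is $2q$ times a nonvanishing function; the nonvanishing $\hat H(p)\ne0$ then reduces to $\inner{g}{\nu_v}(p)\ne0$. You instead work invariantly: with $A,B$ the coefficient matrices of $(f_u,f_v)$ and $(\nu_u,\nu_v)$ in an oriented orthonormal frame of $\nu^{\perp}$, the identities $\mathrm{I}=\trans{A}A$, $\mathrm{II}=-\trans{A}B$, $\lambda=\det A$, together with $\operatorname{adj}(MN)=\operatorname{adj}(N)\operatorname{adj}(M)$ and $\operatorname{adj}(-M)=-\operatorname{adj}(M)$ for $2\times2$ matrices, give the exact factorization $EN-2FM+GL=-\lambda\,\trace\big(A\operatorname{adj}(B)\big)$ (your displayed identity is correct, including the sign), whence the closed formula $\hat H=-\tfrac12\big(\det(f_u,\nu_v,\nu)+\det(\nu_u,f_v,\nu)\big)$ valid on a whole neighborhood --- in effect an intrinsic re-derivation of the expression for $\lambda H$ known from \cite{MSUY,front}. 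What your route buys is that the divisibility of the numerator by exactly one factor of $\lambda$ is structural (an adjugate identity) rather than the outcome of a coordinate computation, and no rotation of $\R^3$ or auxiliary function $l$ is needed; what the paper's route buys is the explicit adapted setup and the identifier $q$, in the computational style reused in its later sections. The endgames coincide: $f_v(p)=\zv$ plus the front condition force $\nu_v(p)\ne\zv$, and $\inner{f_u}{\nu_v}(p)=\inner{f_v}{\nu_u}(p)=0$ makes $f_u(p),\nu_v(p),\nu(p)$ a basis of $\R^3$. Your closing observation that the hypothesis $d\lambda_p=0$ is never used is accurate, and it applies equally to the paper's proof; that hypothesis only delimits the statement from the non-degenerate case already covered by the quoted Fact from \cite{MSUY}.
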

\begin{proof}
Since $p$ is a  corank one singular point of $f$,
we may assume that  
$p=(0,0)$ and
$f$ has the form
$$
f(u,v)=\big(u,f_2(u,v),f_3(u,v)\big),\quad
\big(d(f_2)_{(0,0)}=d(f_3)_{(0,0)}=0\big).
$$
Then we see that we can take $\partial_v$ as a null vector
field, and the unit normal vector
$\nu=(\nu_1,\nu_2,\nu_3)$
satisfies $\nu_1(p)=0$.
Thus, we may further assume that $\nu_2(p)=0$, $\nu_3(p)=1$
by a rotation of $\R^3$.
Since $\inner{f_v(u,v)}{\nu(u,v)}=
(f_2)_v(u,v)\nu_2(u,v)+(f_3)_v(u,v)\nu_3(u,v)=0$,
there exists a non-zero function $l(u,v)$ such that
$$
(f_3)_v(u,v)=l(u,v)(f_2)_v(u,v).
$$
We set $g(u,v)=(0,1,l(u,v))$ and $q(u,v)=(f_2)_v(u,v)$.
Then
the first fundamental matrix ${\rm I}$
and
the second fundamental matrix ${\rm I\!I}$
are
\begin{align*}
{\rm I}&=\pmt{
1+(f_2)_u^2+(f_3)_u^2&q\inner{f_u}{g}\\
q\inner{f_u}{g}&q^2(1+l^2)},
\\
{\rm I\!I}
&=\pmt{
-\inner{f_u}{\nu_u}&-q\inner{g}{\nu_u}\\
-q\inner{g}{\nu_u}
&
-q\inner{g}{\nu_v}}.
\end{align*}
Hence the mean curvature can be computed as
\begin{equation}\label{eq:meanlb}
H=
\dfrac
{(1+(f_2)_u^2+(f_3)_u^2)\inner{g}{\nu_v}
+q*}
{2q
\Big((1+(f_2)_u^2+(f_3)_u^2)(1+l^2)-\inner{f_u}{g}^2\Big)},
\end{equation}
where $*$ stands for a function which is not necessary
in the later calculations.
Since
$$
\det(f_u,f_v,\nu)
=
q\det
\pmt{f_u, g,\nu},
$$
and
$
\det\pmt{f_u, g,\nu}(p)
=\nu_3(p)\ne0$, we see that
$q$ is a singularity identifier.
Thus, the first assertion is shown.
By \eqref{eq:meanlb}, to show the second assertion,
showing $\inner{g}{\nu_v}(p)\ne0$ is enough.
Since
$\det(f_u,g,\nu)(p)\ne0$,
the vectors $f_u(p),g(p),\nu(p)$ form a basis
of $\R^3$,
and by
$\inner{f_u}{\nu_v}(p)=\inner{f_v}{\nu_u}(p)=0$ and
$\inner{\nu}{\nu_v}(p)=0$,
we see that
$\inner{g}{\nu_v}(p)\ne0$
is equivalent to
$\nu_v(p)\ne0$.
Since $f$ at $p$ is a front and $f_v(p)=0$,
we get $\nu_v(p)\ne0$.\qed
\end{proof}

We remark that a map-germ $f:(\R^2,p)\to(\R^3,0)$ 
whose set of singular points is nowhere dense
is a frontal
if and only if the Jacobi ideal of $f$ is 
principal \cite[Lemma 2.3]{ishifrontal}.

\subsection{Geometric invariants of fronts}

Here we review the known geometric invariants
of cuspidal edges and swallowtails.
Singular points of cuspidal edges are non-degenerate and
several geometric invariants are defined and studied.
Let $f:(\R^2,p)\to(\R^3,0)$ be a front
and $\nu$ the unit normal vector.
\begin{definition}\label{def:coord01}
Let $p$ be a non-degenerate singular point of $f$.
A positively oriented coordinate system $(u,v)$ centered at $p$
is said to be
{\it $u$-singular\/}
if $S(f)=\{(u,v)\,|\,v=0\}$ holds.
Let $p$ be a singular point of the first kind.
A $u$-singular coordinate system $(u,v)$ is said to be
{\it strongly adapted\/} if
the null vector on $S(f)$ is $\partial_v$.
\end{definition}
Existence of $u$-singular and strongly adapted coordinate systems
are easily shown.
We assume that $(u,v)$ is a strongly adapted coordinate system.
We set
\begin{align*}
\kappa_s(u)&=
\sgn\big(\det(f_{u},f_{vv},\nu)\big)
\dfrac{\det(f_u,f_{uu},\nu)}
{|f_{u}|^3}\Big|_{(u,0)},\\
\kappa_\nu(u)&=\dfrac{\inner{f_{uu}}{\nu}}
{|f_{u}|^2}\Big|_{(u,0)},\\
\kappa_t(u)&=
\dfrac{\det(f_{u},\,f_{vv},\,f_{vvu})}
{|f_{u}\times f_{vv}|^2}
-
\dfrac{\det(f_{u},\,f_{vv},\,f_{uu})
\inner{f_{u}}{f_{vv}}}
{|f_{u}|^2|f_{u}\times f_{vv}|^2}\Big|_{(u,0)},\\
\kappa_c(u)&=\dfrac{|f_{u}|^{3/2}\det(f_{u},f_{vv}, f_{vvv})}
{|f_u \times  f_{vv}|^{5/2}}\Big|_{(u,0)}.
\end{align*}
All these functions are differential geometric invariants,
and
$\kappa_s$ is called the {\it singular curvature},
$\kappa_\nu$ is called the {\it limiting normal curvature},
$\kappa_t$ is called the {\it cuspidal torsion\/}
({\it cusp-directional torsion\/}), and
$\kappa_c$ is called the {\it cuspidal curvature}.
See \cite{MSUY, MS, front} for detail.
Next we assume that $p$ is a singular point of the $k$-th kind, $k\geq2$.
We take a $u$-singular coordinate system $(u,v)$.
Since $p$ is a singular point of the $k$-th  kind, $k\geq2$,
$\partial_u$ is a null vector at $p$.
Taking $k = 2$, we set
\begin{align}
\mu_c (p) &=
\dfrac{-|f_v|^3\inner{f_{uv}}{\nu_u}}
{|f_{uv}\times f_{v}|^2}\Big|_{p},\label{eq:muc}\\
\tau_s (p) &=
\dfrac{|
\det(f_{uu},f_{uuu},\nu)|}{|f_{uu}|^{5/2}}
\Big|_{p}\, .\label{eq:taus}
\end{align}
The constant $\mu_c$ is called the {\it normalized cuspidal
curvature\/}, and it relates the boundedness of
the mean curvature.
The constant $\tau_s$ is called the {\it limiting singular
curvature\/}, and it measures the wideness of
the cusp of the swallowtail.
See \cite[Section 4]{MSUY} for detail.

\subsection{Kossowski metric}\label{sec:kos}
Let $f:(\R^2,p)\to(\R^3,0)$ be a front-germ.
Then the induced metric $f^*(\inner{~}{~})$ on $\R^2$
is positive semi-definite.
An intrinsic formulation of this metric is
called the Kossowski metric and studied
\cite{kos,hhnsuy}.
Let $g$ be
a positive semi-definite metric-germ at $p\in\R^2$.
A point $q$ is called a {\it singular point\/}
of $g$ if 
the metric $g$ is not positive definite at $q$.
The set of singular points of $g$ is denoted by $S(g)$.
For a singular point $q\in S(g)$,
the subspace
$$
N_q=\{v\in T_q\R^2\,|\, g(v,w)=0 \text{ for all }
w\in T_q\R^2\}
$$
is called the {\it null space\/} at $q$, and
a non-zero vector in $N_q$ is called {\it null vector\/}
at $q$.
A singular point $q$ of $g$ is said to be {\it corank one\/}
if the dimension of $N_q$ is one.
If $q$ is of corank one, then there exists a non-zero vector
field $\eta$ which is a generator of $N_q$.
We call $\eta$ a {\it null vector field}.

\begin{definition}[{\cite[Section 2]{hhnsuy}}]\label{def:metric}
(1) The metric $g$ is {\it admissible\/}
if there exists a coordinate system
$(u,v)$ on a neighborhood of $0$ in $\R^2$ 
such that
$$
F=G=0,\quad
E_v=2F_u,\quad
G_u=G_v=0
$$
hold on $S(g)$,
where 
\begin{equation}\label{eq:efgofg}
E=g(\partial_u,\partial_u),\quad
F=g(\partial_u,\partial_v),\quad
G=g(\partial_v,\partial_v).
\end{equation}
(2) An admissible metric is called a
{\it frontal metric\/}
if there exists a coordinate system
$(u,v)$ on a neighborhood of $0$ in $\R^2$ 
and
there exists a function $\lambda$ such that
\begin{equation}\label{eq:koslambda}
EG-F^2=\lambda^2
\end{equation}
holds on $(\R^2,0)$.
(3) A singular point $p$ of an admissible metric $g$
is called {\it non-degenerate\/}
if
$$
d\lambda_p\ne0
$$
holds,
where $\lambda$ is the function as in \eqref{eq:koslambda}.
(4) A frontal metric is called a {\it Kossowski metric\/}
if all singular points are non-degenerate.
\end{definition}

We remark that the definition
(1) of Definition \ref{def:metric}
is not the same as the original.
The original definition is coordinate free.
See \cite[page 103]{kos} and \cite[Definition 2.3]{hhnsuy}.

\begin{proposition}[{\cite[Proposition 2.10]{hhnsuy}}]
Let\/ $f:(\R^2,p)\to(\R^3,0)$ be a front.
Then the induced metric\/ $f^*(\inner{~}{~})$ is a frontal metric.
If\/ $p$ is a non-degenerate singular point,
then it is a Kossowski metric.
\end{proposition}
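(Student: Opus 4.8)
My plan is to verify directly, from the front structure, the two ingredients that make up a frontal metric (admissibility plus the $\lambda^2$-identity in Definition~\ref{def:metric}), and then to read off the Kossowski property from the non-degeneracy hypothesis. Throughout I write $E,F,G$ for the coefficients of $g=f^*\inner{~}{~}$ as in \eqref{eq:efgofg}, and I set $\lambda=\det(f_u,f_v,\nu)$, the signed area density.

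First I would establish the identity required in condition (2) of Definition~\ref{def:metric}. Since $\nu$ is a unit normal, $f_u\times f_v$ is parallel to $\nu$, so $f_u\times f_v=\det(f_u,f_v,\nu)\,\nu=\lambda\nu$ and $|f_u\times f_v|=|\lambda|$. Lagrange's identity $EG-F^2=|f_u\times f_v|^2$ then gives $EG-F^2=\lambda^2$ in every coordinate system, which is exactly the desired identity; it also identifies the singular set of the metric as $S(g)=\{\lambda=0\}=S(f)$.

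Next I would verify admissibility (condition (1)). Here I would use that the singular point is of corank one, so that a null vector field $\eta$ is defined near $p$; straightening $\eta$ to $\partial_v$ produces coordinates in which $f_v=df(\eta)$ vanishes along $S(g)$. Granted $f_v=0$ on $S(g)$, one computes $F=\inner{f_u}{f_v}=0$, $G=\inner{f_v}{f_v}=0$, $G_u=2\inner{f_v}{f_{uv}}=0$ and $G_v=2\inner{f_v}{f_{vv}}=0$ on $S(g)$, while $E_v-2F_u=2\inner{f_u}{f_{uv}}-2\inner{f_{uu}}{f_v}-2\inner{f_u}{f_{uv}}=-2\inner{f_{uu}}{f_v}=0$ there as well. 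These are precisely the relations $F=G=0$, $E_v=2F_u$, $G_u=G_v=0$ on $S(g)$, so $g$ is admissible; combined with the previous paragraph, $g$ is a frontal metric.

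Finally, for the Kossowski assertion I would assume $p$ non-degenerate, that is $d\lambda_p\neq0$ (the signed area density $\lambda$ being a singularity identifier). Then $S(g)=\lambda^{-1}(0)$ is a regular curve through $p$, and since $d\lambda$ is continuous and nonzero at $p$ it remains nonzero on a whole neighborhood, hence at every singular point near $p$; thus all singular points of $g$ are non-degenerate and $g$ is a Kossowski metric. The step I expect to be delicate is the admissibility one: it hinges on the singular point being of corank one, so that the null direction extends to an honest vector field near $p$ that can be straightened, with some extra care needed in the degenerate case where $S(f)$ need not be a regular curve. The remaining manipulations are routine once $f_v=0$ on $S(g)$ has been arranged.
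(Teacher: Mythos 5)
The paper does not actually prove this proposition; it is imported verbatim from \cite[Proposition 2.10]{hhnsuy}, so there is no in-paper proof to compare against, and your argument has to be judged on its own. On its own terms it is the natural direct verification and it is correct in the setting the paper actually uses. The identity $f_u\times f_v=\det(f_u,f_v,\nu)\,\nu=\lambda\nu$ together with Lagrange's identity cleanly gives $EG-F^2=\lambda^2$ with $\lambda$ smooth, and $S(g)=\lambda^{-1}(0)=S(f)$; once coordinates with $f_v=0$ on $S(g)$ are available, your computations $F=G=G_u=G_v=0$ and $E_v-2F_u=-2\inner{f_{uu}}{f_v}=0$ on $S(g)$ are exactly right. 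The Kossowski assertion is completely covered by your argument: non-degeneracy forces corank one (as the paper notes), so the null vector field $\eta$ exists near $p$ (e.g.\ $\eta=\partial_v-\mu\partial_u$ with $f_v=\mu f_u$ on $S(f)$ after arranging $f_u\neq0$), a flow box straightens it to $\partial_v$, and $d\lambda\neq0$ propagates to all nearby singular points.

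One refinement to your own diagnosis of the delicate step: the dividing line is not whether $S(f)$ is a regular curve or whether $\lambda$ is degenerate, but the corank. Straightening $\eta$ by a flow box requires no regularity of $S(f)$ at all, so degenerate corank-one points (cuspidal lips/beaks, Morse type, where $S(g)$ can be a point or a crossing) are handled by your argument without extra care. What genuinely falls outside your proof is a corank-two singular point of the front. At such a point itself the admissibility relations hold trivially in any coordinates (since $f_u=f_v=0$ there), but if corank-one singular points accumulate at a corank-two point, the null direction along $S(f)$ need not extend continuously --- it behaves like a principal-direction field near an umbilic of the underlying parallel-surface construction, and the two singular branches can carry transverse null directions at the limit point (consider the unit-distance parallel surface of $z=(u^2+v^2)/2+(u^3+v^3)/6$ at the origin). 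In that situation no single coordinate system can make $\partial_v$ null along all of $S(g)$, so the coordinate-based admissibility of Definition \ref{def:metric} is problematic; this is really a subtlety of the paper's modified definition rather than of your proof, since the paper itself remarks that its condition (1) is a coordinate-dependent substitute for the coordinate-free original in \cite{kos,hhnsuy}, under which the general statement is proved there. Read in the corank-one context in which the paper applies the proposition, your proof is complete.
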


Let $p$ be a non-degenerate singular point.
Like as the case of fronts, we give the following definition.
Since $p$ is non-degenerate,
there exists a regular curve-germ
$\gamma:(\R,0)\to (\R^2,p)$ such that
the image of $\gamma$ coincides with the set of singular points
$S(g)$,
and there exists a non-zero vector field $\eta$
near $p$ such that $\eta(q)$ is a null vector for $q\in S(g)$.
We set
\begin{equation}\label{eq:psi}
\psi(t)=\det(\gamma'(t),\eta(t)).
\end{equation}
A non-degenerate singular point is called an $A_2$-{\it point\/}
if $\psi(0)\ne0$, and
a non-degenerate singular point is called an $A_k$-{\it point\/}
if $\psi(0)=0$, $\psi^{(j)}(0)=0$ $(j=1,\ldots,k-3)$,
and $\psi^{(k-2)}(0)\ne0$.
Let $p$ be a corank one but not a non-degenerate singular point 
(i.e., corank one and $d\lambda_p=0$)
of the frontal metric $g$.
Then $p$ is said to be {\it Morse type}
if $\det\hess \lambda(p)\ne0$
and $\eta\eta\lambda(p)\ne0$,
where $\lambda$ is the function defined  by \eqref{eq:koslambda} in
Definition \ref{def:metric}.
We remark that the condition of Morse type
has an additional condition
$\eta\eta\lambda(p)\ne0$ not only $\lambda$ has
a Morse type critical point at $p$.

\section{Kenmotsu-type formula for singular surfaces}
\subsection{Kenmotsu-type formula}\label{sec:kenmotsu}

With the terminology in Section \ref{sec:kos},
we give a Kenmotsu-type formula following
Kokubu \cite{koku}.
Let $U\subset \R^2$ be a simply-connected open set, and
$g$ be a positive semi-definite metric on $U$.
Set $E,F$ and $G$ as in 
\eqref{eq:efgofg}
for a coordinate system $(u,v)$.
We assume that $g$ is a frontal metric,
and take a function $\lambda$ defined by \eqref{eq:koslambda}.
Although $\lambda$ has two choices,
in what follows we fix one of them.
Let $H:U\setminus S(g)\to\R\setminus\{0\}$
be a $C^\infty$-function which satisfies that
\begin{equation}\label{eq:hhat}
\dfrac{1}{-2H(u,v)}
=
\dfrac{\lambda(u,v)}{\hat{H}(u,v)}
\end{equation}
for a non-zero function $\hat{H}$.
Let $\nu:U\to S^2$
be a unit vector valued function, where $S^2=\{X\in \R^3\,|\,|X|=1\}$.
We assume $g$ is of corank one at $p$.
Let $\eta$ be a null vector of $g$ at $p$.
The pair $(g,\nu)$ is called a {\it front pair\/} if
\begin{equation}\label{eq:frontpair}
\eta \nu (p)\ne0.
\end{equation}
It is easy to see that
if $g$ is a induced metric of a front $f$ and $\nu$
is its unit normal vector, then
$(g,\nu)$ is a front pair.
The following theorem gives a kind of recipe to 
obtain a surface with prescribed mean curvature and Kossowski metric.
Since Kossowski metric contains information of 
the class of singularity, one can obtain a desired surface with singular points
which has prescribed mean curvature and given types of singular points.
\begin{theorem}\label{thm:construction}
Let\/ $g$ be a Kossowski metric\/ $($\/respectively, frontal metric\/$)$ 
on a simply-connected open set\/ $U(\subset\R^2)$, 
and let\/ $p\in U$ be an\/ $A_{k+1}$-point\/ $($\/respectively, 
a Morse type singular point\/$)$ of $g$. 
Let\/ $\lambda$ a function defined by\/ \eqref{eq:koslambda},
and let\/ $H$ be a function satisfying\/ \eqref{eq:hhat} 
for some non-zero function\/ $\hat{H}$. 
Let\/ $\nu$ be a unit vector valued map so that\/ $(g,\nu)$ 
is a front pair and\/ $d\nu\neq0$ on\/ $U$.
Assume that\/ $g,H,\nu$ satisfy the integrability condition
\begin{equation}\label{eq:integ}
\dfrac{\partial}{\partial v}
\Big(\dfrac{1}{\hat{H}}
\big(
\lambda \nu_u+F\nu\times\nu_u-E\nu\times\nu_v
\big)\Big)
=
\dfrac{\partial}{\partial u}
\Big(\dfrac{1}{\hat{H}}\big(
\lambda \nu_v+G\nu\times\nu_u-F\nu\times\nu_v
\big)\Big),
\end{equation}
and set 
\begin{equation}\label{eq:kokuken}
f(u,v)=
\int
\dfrac{1}{\hat{H}}\Big(
\big(
\lambda \nu_u+F\nu\times\nu_u-E\nu\times\nu_v
\big)\,du
+
\big(
\lambda \nu_v+G\nu\times\nu_u-F\nu\times\nu_v
\big)\,dv\Big).
\end{equation}
Then\/ $f:U\to\R^3$ is a front, and\/ $p$ is a\/ $k$-th kind 
singular point\/ $($\/respectively, cuspidal lips/beaks\/$)$.
Moreover, the mean curvature of\/ $f$
on the set of regular points coincides with\/ $H$,
the Gauss map is\/ $\nu$, it holds that\/
$S(f)=S(g)$, 
and the induced metric\/ $f^*(\inner{~}{~})$ is
proportional to\/ $g$.
\end{theorem}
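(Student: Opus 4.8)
The plan is to verify the asserted conclusions in turn. First, reading \eqref{eq:kokuken} formally gives the candidate derivatives $f_u=A/\hat H$ and $f_v=B/\hat H$, where $A=\lambda\nu_u+F\,\nu\times\nu_u-E\,\nu\times\nu_v$ and $B=\lambda\nu_v+G\,\nu\times\nu_u-F\,\nu\times\nu_v$. The integrability condition \eqref{eq:integ} is exactly the statement that the $\R^3$-valued one-form $(A\,du+B\,dv)/\hat H$ is closed; since $U$ is simply connected, the Poincar\'e lemma makes it exact, so $f$ is well defined and $df=(A\,du+B\,dv)/\hat H$. Because $\nu_u,\nu_v\perp\nu$ and $\nu\times X\perp\nu$ for all $X$, each summand of $A$ and $B$ is orthogonal to $\nu$; hence $\inner{f_u}{\nu}=\inner{f_v}{\nu}=0$, and as $|\nu|=1$ the map $f$ is a frontal with Gauss map $\nu$.

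Next I would compute the fundamental forms using $\inner{\nu\times a}{\nu\times b}=\inner{a}{b}$ and $\inner{\nu\times a}{b}=\det(\nu,a,b)$, both valid for $a,b\perp\nu$. Writing $L=\inner{\nu_u}{\nu_u}$, $M=\inner{\nu_u}{\nu_v}$, $N=\inner{\nu_v}{\nu_v}$ and $\delta=\det(\nu,\nu_u,\nu_v)$, a direct expansion gives $\inner{A}{A}=(\lambda^2+F^2)L-2EFM+E^2N+2\lambda E\delta$ together with analogous expressions for $\inner{A}{B}$ and $\inner{B}{B}$. The decisive step is to insert $\lambda^2=EG-F^2$ from \eqref{eq:koslambda}, which turns $(\lambda^2+F^2)L$ into $EGL$ and collapses the three quantities into a common factor,
$$
\inner{A}{A}=\mu E,\quad \inner{A}{B}=\mu F,\quad \inner{B}{B}=\mu G,\quad \mu:=EN+GL-2FM+2\lambda\delta,
$$
so that $f^*\inner{~}{~}=\rho\,g$ with $\rho=\mu/\hat H^2$. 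For the second fundamental form the same identities give, as symmetric matrices, $\mathrm{II}^f=-(\lambda\,\mathrm{III}+\delta\,g)/\hat H$ with $\mathrm{III}=\pmt{L&M\\M&N}$; feeding $\mathrm{I}^f=\rho g$ and this $\mathrm{II}^f$ into $H^f=\tfrac12\trace\big((\mathrm{I}^f)^{-1}\mathrm{II}^f\big)$ and simplifying once more with $\lambda^2=EG-F^2$ yields $H^f=-\hat H/(2\lambda)$, which equals $H$ by \eqref{eq:hhat}.

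To control the singular set I would prove $\mu>0$ on $U$. At a point of $S(g)$ we have $\lambda=0$ and $g$ of rank one; if $\eta$ is the null vector there, the adjugate of $\pmt{E&F\\F&G}$ is a positive multiple of $\eta\,\trans{\eta}$, whence $\mu=EN+GL-2FM=\trace(\mathrm{adj}(g)\,\mathrm{III})$ is a positive multiple of $\mathrm{III}(\eta,\eta)=|\eta\nu|^2$, nonzero precisely by the front-pair condition \eqref{eq:frontpair}. On the regular set the adjugate is positive definite and $\mathrm{III}\ge0$ is nonzero because $d\nu\neq0$, so $\trace(\mathrm{adj}(g)\,\mathrm{III})>0$; fixing the sign of $\lambda$ so that $\lambda\delta\ge0$ gives $\mu>0$ there as well. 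Thus $\rho\neq0$, and from $\det f^*\inner{~}{~}=\rho^2\lambda^2$ we conclude $S(f)=\{\lambda=0\}=S(g)$, while the null direction of $f^*\inner{~}{~}=\rho g$ coincides with that of $g$, namely $\eta$. Since $\eta\nu\neq0$, the pair $(f,\nu)$ is an immersion along $S(f)$, so $f$ is a front.

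Finally, for the type of singularity I would use that $f$ and $g$ share the singular curve $\gamma$ and the null field $\eta$, so the function $\phi$ of \eqref{eq:phi} attached to $f$ and the function $\psi$ of \eqref{eq:psi} attached to $g$ have the same vanishing order at $0$. Hence $p$ being an $A_{k+1}$-point of $g$ (that is, $\psi,\dots,\psi^{(k-2)}$ vanishing and $\psi^{(k-1)}(0)\neq0$) is exactly the condition that $p$ be a singular point of the $k$-th kind for $f$, and Fact \ref{fact:krsuy} names the cuspidal edge, swallowtail and cuspidal butterfly for $k=1,2,3$. In the frontal-metric case the singularity identifier $\lambda_f=\det(f_u,f_v,\nu)$ satisfies $\lambda_f=c\lambda$ with $c=\pm\rho$ nonvanishing; at a degenerate singular point ($d\lambda_p=0$) this forces $d(\lambda_f)_p=0$ and $\hess(\lambda_f)_p=c(p)\hess\lambda_p$, so $\det\hess\lambda_f(p)=c(p)^2\det\hess\lambda(p)$ has the sign of $\det\hess\lambda(p)$ and $\eta\eta\lambda_f(p)=c(p)\,\eta\eta\lambda(p)\neq0$. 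The Morse-type hypothesis therefore transfers unchanged, and Fact \ref{fact:ist} yields cuspidal lips when $\det\hess\lambda(p)>0$ and cuspidal beaks when $\det\hess\lambda(p)<0$. I expect the main obstacle to lie here: certifying that the purely intrinsic data of the metric, namely the vanishing order of $\psi$ and the Hessian of $\lambda$, govern the extrinsic singularity type of $f$. The nonvanishing of the conformal factor $\rho$, itself a consequence of the front-pair condition, is the decisive ingredient that makes this transfer go through.
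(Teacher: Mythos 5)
Your route is essentially the paper's own: read off $f_u,f_v$ from \eqref{eq:kokuken} using simple connectivity and \eqref{eq:integ}, check $\inner{df}{\nu}=0$ so that $\nu$ is the Gauss map, compute both fundamental forms and extract the common factor (your $\mu$ is the paper's $X=2\lambda D+GP-2FQ+ER$, with your $\delta=\det(\nu,\nu_u,\nu_v)$ equal to the paper's $D$ by cyclic permutation), obtain $H_f=-\hat{H}/(2\lambda)=H$ and $\det(f_u,f_v,\nu)=\lambda X/\hat{H}^2$, and then transfer the singularity type through the coincidence of $\phi$ in \eqref{eq:phi} with $\psi$ in \eqref{eq:psi} and through Facts \ref{fact:krsuy} and \ref{fact:ist}. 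Your verification that $X\neq0$ along $S(g)$ -- via $\mathrm{adj}(g)$ being a positive multiple of $\eta\,\trans{\eta}$, so that $X$ restricted to $S(g)$ is a positive multiple of $|\eta\nu|^2\neq0$ by \eqref{eq:frontpair} -- is a clean coordinate-free version of the paper's adapted-coordinate computation $X=ER\neq0$, and your Hessian bookkeeping ($\hess\lambda_f(p)=c(p)\hess\lambda_p$ when $d\lambda_p=0$) is a more explicit form of the paper's direct appeal to Fact \ref{fact:ist} with $\lambda$ itself as singularity identifier. The index bookkeeping relating $A_{k+1}$-points to $k$-th kind singular points is also correct.

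The one step that fails is your argument that $\mu>0$ off the singular set: ``fixing the sign of $\lambda$ so that $\lambda\delta\ge0$'' is not a move available to you. The function $\lambda$ is fixed data of the theorem -- it enters the integrability condition \eqref{eq:integ} and the integrand in \eqref{eq:kokuken}, so flipping its sign changes the surface (the paper's remark following the theorem makes exactly this point: the two signs of $\lambda$ produce two distinct fronts $f_\pm$). Moreover, even one global sign flip cannot arrange $\lambda\delta\ge0$ pointwise, since $\delta$ can change sign on $U$ independently of $\lambda$. Your own inequality $\trace\big(\mathrm{adj}(g)\,\mathrm{III}\big)\ge 2\sqrt{\det g\,\det \mathrm{III}}=2|\lambda\delta|$ yields only $\mu\ge0$, with equality possible precisely where $\mathrm{III}$ is a positive multiple of $g$ and $\lambda\delta<0$; so positivity of $\mu$ at regular points does not follow from $d\nu\neq0$ alone, and your patch does not close this. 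For perspective: the affected conclusion is only the global statement $S(f)=S(g)$ on all of $U$, which the paper itself settles with a one-line assertion that $X$ does not vanish on $U$ because $d\nu\neq0$; everything local to $p$ -- frontness near $S(g)$, the fact that $\lambda$ is a singularity identifier for $f$ there, and the determination of the singularity type -- requires $X\neq0$ only in a neighborhood of $S(g)$, which your adjugate argument plus continuity already provides. So the defect is confined to that one sentence, but as written that sentence is wrong and should be replaced (for instance, by restricting the claim $S(f)=S(g)$ to a neighborhood of $S(g)$, or by an honest argument excluding the equality case $\mathrm{III}\propto g$, $\lambda\delta<0$ on the regular set).
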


\begin{proof}
Since
\begin{equation}\label{eq:fufv}
f_u=
\dfrac{1}{\hat{H}}\Big(
\lambda \nu_u+F\nu\times\nu_u-E\nu\times\nu_v\Big),\quad
f_v=\dfrac{1}{\hat{H}}
\Big(\lambda  \nu_v+G\nu\times\nu_u-F\nu\times\nu_v\Big),
\end{equation}
$\nu$ gives the unit normal vector of $f$.
Setting
\begin{equation}
\label{eq:pqrd}
\begin{array}{rlrlrlrl}
P=& \inner{\nu_u}{\nu_u},\ &
Q=& \inner{\nu_u}{\nu_v},\ &
R=& \inner{\nu_v}{\nu_v},\ &
D=& \det(\nu_u,\,\nu_v,\,\nu),\\
E_f=& \inner{f_u}{f_u},\ &
F_f=& \inner{f_u}{f_v},\ &
G_f=& \inner{f_v}{f_v},\\
L_f=& \inner{f_{uu}}{\nu},\ &
M_f=& \inner{f_{uv}}{\nu},\ &
N_f=& \inner{f_{vv}}{\nu},
\end{array}
\end{equation}
and
$$
X=2\lambda D+GP-2FQ+ER,
$$
we have
\begin{align*}
E_f
&=
\dfrac{X}{\hat{H}^2}E, &
F_f
&=
\dfrac{X}{\hat{H}^2}F, &
G_f
&=
\dfrac{X}{\hat{H}^2}G, \\
L_f&=-\dfrac{1}{\hat{H}}\left(\lambda P+ED\right), &
M_f&=-\dfrac{1}{\hat{H}}\left(\lambda Q+FD\right), &
N_f&=-\dfrac{1}{\hat{H}}\left(\lambda R+GD\right).
\end{align*}
Thus, we get
$$
H_f
=
\dfrac{E_fN_f-2F_fM_f+G_fL_f}{2(E_fG_f-F_f^2)}
=
-\dfrac{\hat{H}}{2\lambda}=H
$$
on the set of non-singular points by using equations $\lambda^2=EG-F^2$ and $GP-2FQ+ER=X-2\lambda D$, and
$$
\det(f_u,f_v,\nu)=
\dfrac{\lambda}{\hat{H}^2}X.
$$

We now consider the function $X$. 
Since $g$ is an frontal metric on $U$, 
it follows that $\eta=\partial_v$, namely, $E\neq0$ and $\lambda=F=G=0$ on $S(g)$ (cf. Definition \ref{def:metric}).
Since $(g,\nu)$ is a front pair, $\eta\nu=\nu_v\neq0$ on $S(g)$, and hence $R\neq0$ on $S(g)$. 
Therefore $X\neq0$ on $S(g)$. 
Thus we see that $f$ is a front. 
Moreover, since $\eta\nu\ne0$, in particular, 
$d\nu\neq0$ on $U$, the function $X$ does not vanish on $U$. 
Thus $S(f)=S(g)$ holds on $U$. 
Further, it follows that $\eta f=f_v=0$ on $S(f)$. 
Thus we can also regard $\eta$ as a null vector field of $f$.

We consider types of singularities of $f$. 
First we assume that $g$ is a Kossowski metric on $U$ and $p$ is an $A_{k+1}$-point of $g$. 
Then $d\lambda_p\neq0$. 
Thus there exists a regular curve $\gamma(t)$ such that $\gamma$ parametrizes $S(g)=S(f)$ on $U$. 
In this situation, the functions $\phi(t)$ and $\psi(t)$ as in \eqref{eq:phi} and \eqref{eq:psi}, respectively, are the same. 
By definitions of $k$-th kind and $A_{k+1}$-point, 
it holds that $f$ has a $k$-th kind singularity at $p$. 

We next assume that $p$ is a Morse type singular point of $g$. 
Then $d\lambda_p=0$, $\det\hess \lambda(p)\neq0$ and $\eta\eta\lambda(p)\neq0$ hold. 
On the other hand, by the above calculation, it holds that $\det(f_u,f_v,\nu)=\lambda\cdot(\text{non-zero function})$. 
Thus by Fact \ref{fact:ist}, $f$ has a cuspidal lips or a cuspidal beaks at $p$.\qed
\end{proof}

Since the function $\lambda$ defined by \eqref{eq:koslambda}
has the plus-minus ambiguity, 
by \eqref{eq:kokuken} the surface
$f$ also has two possibilities, say $f_+$ and $f_-$,
for a semi-definite metric $g$ and single $H$, $\nu$.
In this case, considering $(u,-v)$ instead of $(u,v)$,
then we see that $-\lambda$ satisfies the integrability
condition, and
the resulting surface $f_-(u,v)$ coincides
with $f_+(u,-v)$.

\begin{example}
Let us set $J=(-1/2,1/2)$ and
\begin{align*}
&H(v)=-\dfrac{1}{2\sinh v}(-3+\cosh 2v),\quad
E(v)=1/\cosh^2v,\quad
F(v)=0,\\
&G(v)=\sinh^2v/\cosh^2v,\quad
\lambda(v)=\sinh v/\cosh^2v
\end{align*}
for $v\in J$.
Then we see $\lambda(v)^2=E(v)G(v)-F(v)^2$, and
$\hat H(v)=-2H(v)\lambda(v)=(-3+\cosh 2v)/\cosh^2v$
is a non-zero $C^\infty$-function on $J$.
Regarding these functions as functions on $\R\times J$,
we can see that the null vector field of $g=Edu^2+2Fdudv+Gdv^2$
is $\partial_v$, and each point on $\{(u,0)\}$ is an $A_2$-singularity.
Let us set $\nu(u,v)=(\cos u \sinh v,\sin u \sinh v,1)/\cosh v$.
Then $\eta\nu\ne0$ holds.
Moreover, these functions satisfy the condition \eqref{eq:kokuken}.
By Theorem \ref{thm:construction}, there exists a
surface $f:\R\times J\to \R^3$ whose mean curvature
is $H$ and $S(f)=\{(u,0)\}$.
Since $(u,0)$ is an $A_2$-type singularity of $g$,
it holds that $f$ at $(u,0)$ is a cuspidal edge.
\end{example}

Here, we give an application of Theorem \ref{thm:construction}.
Let $I\subset \R$ be an interval, and $S\subset I$ be a
discrete subset.
Taking two functions $H:I\setminus S\to \R\setminus\{0\}$
and
$l:I\to \R$ satisfying $l^{-1}(0)=S$ and
$lH$ is a non-zero $C^\infty$ function on $I$.
We set $\hat H=-2lH$, and $\nu(u,v)=(\sin\theta(u),-\cos\theta(u),0)$,
where $\theta(u)$ is a primitive function of $\hat H$,
and $(u,v)\in I\times\R$.
We also set 
$$
E(u,v)=l(u)^2,\quad F(u,v)=0,\quad G(u,v)=1\quad
((u,v)\in I\times\R).
$$
Then these functions satisfy the condition \eqref{eq:kokuken}.
By Theorem \ref{thm:construction}, there exists a
surface $f:I\times \R\to \R^3$ whose mean curvature
is $H$ and 
$f=(f_1(u),f_2(u),v)$, 
where $f_1$ (respectively, $f_2$) 
is a primitive function of $l(u)\cos\theta(u)$ (respectively, $l(u)\sin\theta(u)$).
Hence for any function pair $H:I\setminus S\to \R\setminus\{0\}$,
and
$l:I\to \R$ satisfying $I^{-1}(0)=S$ and
$lH$ is a non-zero $C^\infty$ function on $I$,
there exists a surface which is symmetric with respect to a translation,
whose mean curvature is $H$ and the singular set is $S\times \R$.
Setting $H=-1/\sin u$, and $l=(\sin u)/2$ (respectively, $l=(11/10)\sin u$), 
then
we obtain a surface
$f(u,v)=(-(\cos^2u)/4,(u-\cos u\sin u)/4,v)$
(respectively, 
$f(u,v)=(
11(8 \cos u_1-3 \cos u_2)/192 ,11(8 \sin u_1-3 \sin u_2)/192,v)$,
where $u_1=6u/5, u_2=16 u/5$)
which is illustrated in Figure \ref{fig:sin}, left (respectively, right).
\begin{figure}[ht]
\centering
\includegraphics[width=.5\linewidth]{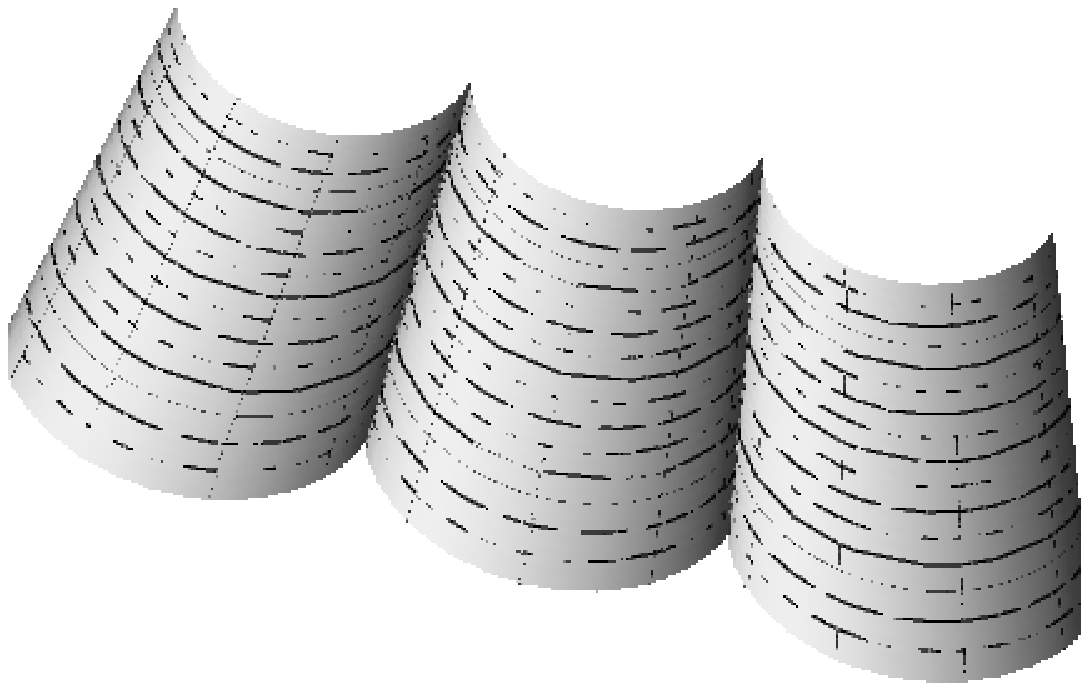}
\hspace{10mm}
\includegraphics[width=.3\linewidth]{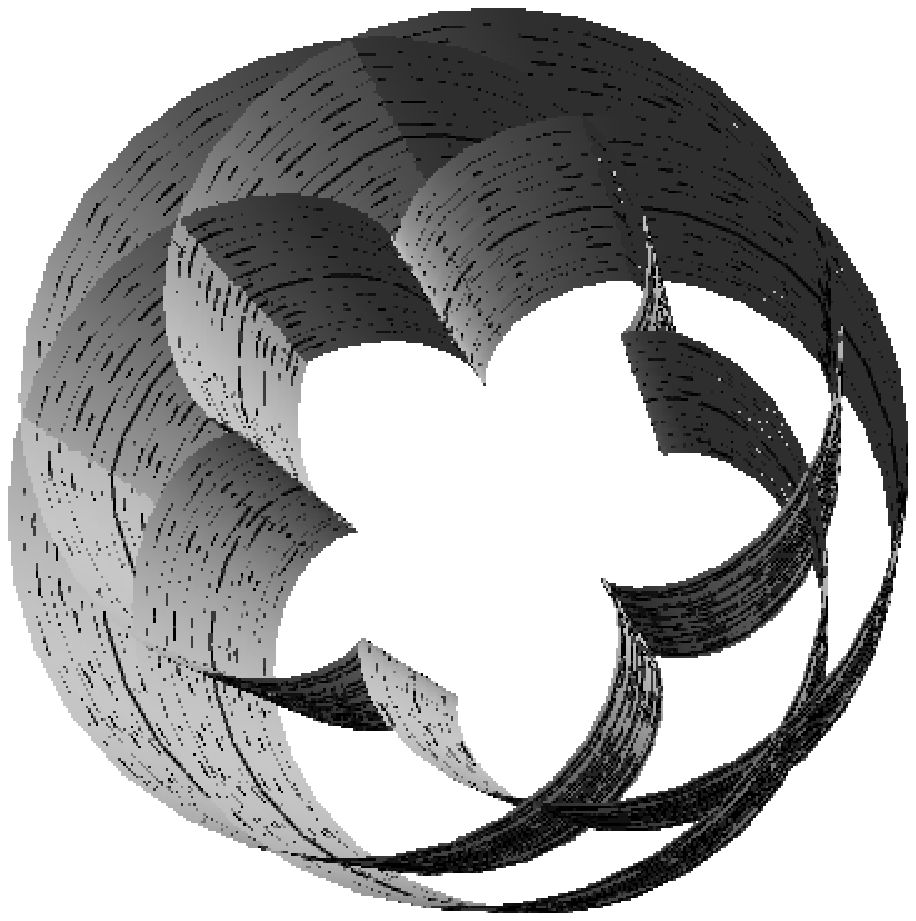}
\caption{Surfaces whose mean curvatures are $-1/\sin u$.} 
\label{fig:sin}
\end{figure}

\vspace*{-0.8cm}
\subsection{Invariants of cuspidal edge}
In this section, we calculate geometric invariants of $f$ for the
case that $f$ is a cuspidal edge. We take $g,H,\nu$ satisfying the
assumption of Theorem \ref{thm:construction}, and fix a function
$\lambda$ defined by \eqref{eq:koslambda}. Let $f$ be the front as
in \eqref{eq:kokuken}. We assume further that $p$ is a singular
point of the first kind of $f$, namely, $f$ at $p$
is a cuspidal edge. Similar to Definition \ref{def:coord01}, we
define coordinate systems.
\begin{definition}\label{def:coord02}
Let $p$ be a non-degenerate singular point of a metric $g$.
A positively oriented coordinate system $(u,v)$ is said to be
{\it $u$-singular\/}
if the set of singular points satisfies
$S(g)=\{(u,v)\,|\,v=0\}$.
Let $p$ be an $A_2$-type singular point.
A $u$-singular coordinate system $(u,v)$ centered at $p$
is said to be
{\it strongly adapted\/} if
the null vector on $S(g)$ is $\partial_v$.
\end{definition}

See \cite[Section 2]{hhnsuy} for existence of a strongly
adapted coordinate system. Then we have the following proposition.

\begin{proposition}\label{prop:invariants}
Let\/ $p$ be an\/ $A_2$-type singular point, and\/
$(u,v)$ a strongly adapted coordinate system.
Then the invariants\/
$\kappa_\nu$, $\kappa_s$, $\kappa_c$ and\/ $\kappa_t$
can be calculated as
\begin{align}
\kappa_\nu(u)
=&-\frac{D\hat{H}}{ER}\bigg|_{(u,0)},\label{eq:propknu}\\
\kappa_s(u)=&
\sgn\big(\det(f_u,f_{vv},\nu)\hat{H}\big)
\frac{\det(\nu,\nu_v,\nu_{uv})\hat{H}}
{ER^{3/2}}\bigg|_{(u,0)}
,\label{eq:propks}\\
\kappa_c(u)=& -\sgn\big(\hat{H}\big)
\dfrac{2|\hat{H}|^{1/2}R^{1/4}}{|\lambda_v|^{1/2}}
\bigg|_{(u,0)},\label{eq:propkc}\\
\kappa_t(u) =&
-\frac{\hat{H}Q}
{ER}\bigg|_{(u,0)}.\label{eq:propkt}
\end{align}
Here, $E,F,G$ are defined in\/ \eqref{eq:efgofg},
and\/ $P,Q,R,D$ are defined in\/ \eqref{eq:pqrd}.
\end{proposition}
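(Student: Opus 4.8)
The plan is to substitute the expressions \eqref{eq:fufv} for $f_u,f_v$ into the definitions of $\kappa_\nu,\kappa_s,\kappa_c,\kappa_t$ and evaluate everything along the singular set $S(f)=\{v=0\}$, where the strongly adapted coordinate system forces drastic simplifications. On $S(f)$ one has $\lambda=F=G=0$ (by Definition \ref{def:metric} and the fact that $\partial_v$ is the null direction), hence also $\lambda_u=F_u=G_u=0$ along $\{v=0\}$; admissibility additionally gives $G_v=0$ on $S(f)$, while non-degeneracy $d\lambda_p\neq0$ together with $\lambda|_{\{v=0\}}=0$ yields $\lambda_v\neq0$. Throughout I would use the identities coming from $|\nu|\equiv1$, namely $\inner{\nu}{\nu_u}=\inner{\nu}{\nu_v}=0$ and, after differentiating, $\inner{\nu}{\nu_{uv}}=-Q$, $\inner{\nu}{\nu_{vv}}=-R$, with $P,Q,R,D$ as in \eqref{eq:pqrd} and $R>0$.

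The heart of the computation is to rewrite the relevant derivatives in the frame adapted to $\nu$. From \eqref{eq:fufv}, on $S(f)$ one gets $f_u=-(E/\hat{H})\,\nu\times\nu_v$ and $f_v=0$. Differentiating \eqref{eq:fufv} and discarding terms that vanish on $S(f)$ gives, on $S(f)$,
\[
f_{vv}=\frac{\lambda_v}{\hat{H}}\nu_v+\frac{F_v}{E}f_u,\qquad
f_{uu}=-\frac{\hat{H}_u}{\hat{H}}f_u+\frac1{\hat{H}}\bigl(-E_u\,\nu\times\nu_v-E\,\nu_u\times\nu_v-E\,\nu\times\nu_{uv}\bigr).
\]
The key vector identities, all consequences of the BAC--CAB rule and the relations above, are
\[
f_u\times\nu_v=\frac{ER}{\hat{H}}\nu,\qquad
f_u\times f_{vv}=\frac{\lambda_v ER}{\hat{H}^2}\nu,\qquad
\det(f_u,\nu_v,\nu)=\frac{ER}{\hat{H}}.
\]
These immediately produce $E_f=E^2R/\hat{H}^2$, $|f_u\times f_{vv}|^2=\lambda_v^2E^2R^2/\hat{H}^4$, and $\det(f_u,f_{vv},\nu)=\lambda_v ER/\hat{H}^2$, so that $\sgn\det(f_u,f_{vv},\nu)=\sgn(\lambda_v)$.

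With these in hand each invariant falls out. For $\kappa_\nu$ I only need $\kappa_\nu=L_f/E_f$ together with $L_f=-ED/\hat{H}$ on $S(f)$ from the proof of Theorem \ref{thm:construction}, which gives \eqref{eq:propknu}. For $\kappa_s$, substituting $f_{uu}$ shows $\det(f_u,f_{uu},\nu)=(E^2/\hat{H}^2)\det(\nu,\nu_v,\nu_{uv})$ once one checks that of the two triple products arising, $\det(\nu\times\nu_v,\nu_u\times\nu_v,\nu)$ vanishes while $\det(\nu\times\nu_v,\nu\times\nu_{uv},\nu)=\det(\nu,\nu_v,\nu_{uv})$; dividing by $|f_u|^3=E^3R^{3/2}/|\hat{H}|^3$ and combining the two sign factors into $\sgn(\det(f_u,f_{vv},\nu)\hat{H})$ yields \eqref{eq:propks}. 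For $\kappa_c$ I would compute $\inner{f_{vvv}}{\nu}=-2\lambda_vR/\hat{H}$ by differentiating $N_f=\inner{f_{vv}}{\nu}$ along $v$ and using $\inner{f_{vv}}{\nu_v}=\lambda_vR/\hat{H}$; then $\det(f_u,f_{vv},f_{vvv})=-2\lambda_v^2ER^2/\hat{H}^3$, and the fractional powers reduce to \eqref{eq:propkc} after tracking $\sgn(\hat{H})$. For $\kappa_t$ I would need $f_{vvu}=\partial_u f_{vv}$ together with $\det(f_u,\nu_v,f_{uu})=-E^2RD/\hat{H}^2$, after which both summands in the definition are evaluated and subtracted.

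The main obstacle is the bookkeeping of the spurious $F_v$-terms (and the verification that $G_v=0$ on $S(f)$, which I take from the admissibility of the strongly adapted coordinates, cf.\ Definition \ref{def:metric}). The $F_v$ contribution in $f_{vv}$ is proportional to $f_u$, so it is annihilated inside every determinant $\det(f_u,\,\cdot\,,\,\cdot)$ and inside $f_u\times f_{vv}$; this disposes of it in $\kappa_s$ and $\kappa_c$ automatically. In $\kappa_t$, however, $F_v$ survives through $\inner{f_u}{f_{vv}}=F_v ER/\hat{H}^2$ and through $f_{vvu}$, producing a term $-(F_vD\hat{H})/(\lambda_v ER)$ in the first summand and an identical term in the second, so the subtraction in the definition of $\kappa_t$ cancels them exactly and leaves $-\hat{H}Q/(ER)$, i.e.\ \eqref{eq:propkt}. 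The only other delicate point is the consistent handling of the square roots and the sign factors $\sgn(\hat{H})$, $\sgn(\lambda_v)$ in $\kappa_c$ and $\kappa_s$, which I would resolve using $E,R>0$ and $\hat{H}^2=|\hat{H}|^2$.
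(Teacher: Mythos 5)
Your proposal is correct, and for $\kappa_\nu$, $\kappa_s$ and $\kappa_c$ it essentially reproduces the paper's computation, including the sign bookkeeping ($\sgn\det(f_u,f_{vv},\nu)=\sgn(\lambda_v)$ and $|\hat{H}|^{3}/\hat{H}^{2}=\sgn(\hat{H})\hat{H}$ both check out); your shortcuts --- reading $\kappa_\nu=L_f/E_f$ off the frame computation in the proof of Theorem \ref{thm:construction}, and obtaining $\inner{f_{vvv}}{\nu}=-2\lambda_vR/\hat{H}$ by differentiating $N_f$ rather than expanding $f_{vvv}$ in full as the paper does --- are sound and only streamline the argument. The genuine difference is in $\kappa_t$. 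The paper computes the third mixed derivative as $\partial_v f_{uv}$ from the $f_u$-expression in \eqref{eq:fufv}; this produces the intermediate expression \eqref{eq:ktpre} with numerator $-2\lambda_vQ-F_vD-E\det(\nu,\nu_v,\nu_{vv})$, which reduces to $-\lambda_vQ$ only after invoking the integrability condition \eqref{eq:integ} restricted to the $u$-axis and paired with $\nu_v$, i.e.\ $\lambda_vQ+F_vD=-E\det(\nu,\nu_v,\nu_{vv})$. You instead compute $f_{vvu}=\partial_u f_{vv}$ from the $f_v$-expression; since $\partial_u$ is tangent to the singular axis, your on-axis identity $f_{vv}=(\lambda_v/\hat{H})\nu_v+(F_v/E)f_u$ may legitimately be differentiated in $u$, giving $\inner{\nu}{f_{vvu}}=-(\lambda_vQ+F_vD)/\hat{H}$ with no $\det(\nu,\nu_v,\nu_{vv})$ term at all; and because $\inner{f_u}{f_{vv}}=(F_v/E)|f_u|^2$ and $\inner{\nu}{f_{uu}}=-ED/\hat{H}$, the $(F_v/E)f_{uu}$-contribution to the first summand is verbatim the subtracted second summand in the definition of $\kappa_t$, so the two $F_vD$-terms (each equal to $-F_vD\hat{H}/(\lambda_vER)$) cancel exactly as you claim, leaving \eqref{eq:propkt}. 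The two routes are consistent precisely because \eqref{eq:integ} holds --- your $\inner{\nu}{f_{vvu}}$ and the paper's differ exactly by the on-axis integrability identity --- so you use integrability only implicitly, through the existence of a single map $f$ whose partials are \eqref{eq:fufv}, whereas the paper makes its role explicit; your route is shorter, the paper's makes visible where the hypothesis \eqref{eq:integ} enters the invariant. One cosmetic point: $G_v=0$ on $\{v=0\}$ does not literally follow from Definition \ref{def:metric}, since admissibility only asserts the \emph{existence} of some coordinate system with those normalizations, not that your strongly adapted one is such; it follows instead from positive semi-definiteness ($G\ge0$ and $G|_{v=0}=0$ force $G_v|_{v=0}=0$), or, as in the paper, from $f_v=0$ on the axis together with the induced metric being a non-vanishing multiple of $g$.
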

\begin{proof}
Since $(u,v)$ is a strongly adapted coordinate system, 
it holds that $f_v=0$ on the $u$-axis.
Thus $F=G=0$ and $F_u=G_u=G_v=0$ on the $u$-axis, and $\lambda$ is a non-zero
functional multiplication of $v$. So, $\lambda = \lambda_u = 0$
on the $u$-axis. We set $1/\hat{H}=a$. Firstly we consider
$\kappa_\nu$ and $\kappa_s$ of $f$. It holds that $f_u=-aE
\nu\times\nu_v$ on the $u$-axis. So,
\begin{equation}
|f_u| = |a|ER^{1/2}
 \label{eq:fu}
 \end{equation}
 on the $u$-axis.
 Since
\begin{align*}
f_{uu}=&(\lambda_ua+\lambda a_u)\nu_u
+\lambda a\nu_{uu}+(aF_u+a_uF)\nu\times\nu_u\nonumber\\
&\hspace{30mm}
+aF(\nu\times\nu_u)_u-(aE_u+a_uE)\nu\times\nu_v-aE(\nu\times\nu_v)_u,
\end{align*}
and $F_u=0$ on the $u$-axis, we have 
\begin{equation}
f_{uu}=-(aE_u+a_uE)\nu\times\nu_v-aE(\nu_u\times\nu_v+\nu\times\nu_{uv})
\label{eq:fuu}
 \end{equation}
on the $u$-axis. 
Thus by \eqref{eq:fu} and \eqref{eq:fuu}, we
have $\kappa_\nu(u) =-D/(aER) (u,0)$, and it proves \eqref{eq:propknu}.
Furthermore, since
$$f_u\times\nu=-aE\nu_v$$
on the $u$-axis,
it follows that
$$\det(f_u,f_{uu},\nu)=-\inner{f_{uu}}{f_u\times\nu}
=a^2E^2\det(\nu,\nu_v,\nu_{uv})$$
on the $u$-axis.
Therefore,
\begin{equation*}
\kappa_s(u)=
\sgn(\det(f_u,f_{vv},\nu))
\frac{\det(\nu,\nu_v,\nu_{uv})}{|a|ER^{3/2}} (u,0),
\end{equation*}
and it proves \eqref{eq:propks}.

Next we consider $\kappa_c$.
By direct calculations, we have:
\begin{align*}
f_{vv}&=(\lambda_va+\lambda a_v)\nu_v
        +\lambda a\nu_{vv}+(aG_v+a_vG)\nu\times\nu_u\\
&\hspace{5mm}
+aG(\nu\times\nu_u)_v-(aF_v+a_vF)\nu\times\nu_v-aF(\nu\times\nu_v)_v,\\
f_{vvv}&=(\lambda_{vv}a+2\lambda_va_v+\lambda a_{vv})\nu_v
+2(\lambda a_v+\lambda_v a)\nu_{vv}+\lambda a\nu_{vvv}\\
&\hspace{5mm}
+(aG_{vv}+2a_vG_v+a_{vv}G)\nu\times\nu_u
+2(aG_v+a_vG)(\nu\times\nu_u)_v\\
&\hspace{5mm}
+aG(\nu\times\nu_u)_{vv}
-(aF_{vv}+2a_vF_v+a_{vv}F)\nu\times\nu_v
\\
&\hspace{5mm}
-2(aF_v+a_vF)(\nu\times\nu_v)_v-aF(\nu\times\nu_v)_{vv}.
\end{align*}
So,
\begin{align}
f_{vv}&=\lambda_va\nu_v-aF_v\nu\times\nu_v,
\label{fvv}\\
f_{vvv}&=(\lambda_{vv}a+2\lambda_va_v)\nu_v
         +2\lambda_va\nu_{vv}+aG_{vv}\nu\times\nu_u\nonumber\\
&\hspace{20mm}
-(aF_{vv}+2a_vF_v)\nu\times\nu_v-2aF_v(\nu\times\nu_v)_v
\label{fvvv}
\end{align}
on the $u$-axis.
Therefore, it holds that
\begin{equation}\label{eq:fufvv}
f_u\times f_{vv}=
-a^2E\lambda_v(\nu\times\nu_v)\times\nu_v=a^2E\lambda_vR\nu
\end{equation}
on the $u$-axis. Noticing that $R= - \inner{\nu}{\nu_{vv}}$,
we have
$$\det(f_u,f_{vv},f_{vvv})=-2a^3E\lambda_v^2R^2.$$

On the other hand, by \eqref{eq:fu} and \eqref{eq:fufvv}, it
follows that
$$
|f_u|^{3/2}=(|a|ER^{1/2})^{3/2},\quad
|f_u\times f_{vv}|^{5/2}=a^4E^{5/2}\lambda_v^2R^{5/2}
|a||\lambda_v|^{1/2},
$$ 
on the $u$-axis. Hence we obtain
\begin{align*}
\kappa_c(u)&=
\frac{-2(|a|ER^{1/2})^{3/2}a^3E\lambda_v^2R^2}
{a^4E^{5/2}\lambda_v^2R^{5/2}|a||\lambda_v|^{1/2}} (u,0)\\
& =
\frac{-2|a|^{3/2}R^{1/4}} {a|a||\lambda_v|^{1/2}} (u,0) =
\frac{-2|a|^{1/2}R^{1/4}} {a |\lambda_v|^{1/2}} (u,0)
\end{align*}
which proves \eqref{eq:propkc}.
Next we consider $\kappa_t$.
Since
\begin{align}
f_{uv}=& a \lambda_v  \nu_u + aF_v \nu \times \nu_u
- (a E)_v \nu \times \nu_v + \lambda(a_v \nu_u + a\nu_{uv}) \nonumber \\
&\hspace{10mm}
+ F(a_v \nu \times \nu_u - a \nu_u \times \nu_v
+ a \nu \times \nu_{uv}) - aE\nu \times \nu_{vv},\nonumber
\end{align}
using \eqref{eq:fuu},
\eqref{eq:fufvv},  and noticing that $Q=-\inner{\nu}{\nu_{uv}}$, 
we obtain
\begin{align}
\inner{f_u}{f_{vv}} =& \, a^2 E F_vR \label{eq:fudotfvv}\\
\det(f_u,f_{vv},f_{uvv})=& \inner{a^2 E \lambda_v R \nu}{2a\lambda_v
\nu_{uv}
- 2aF_v \nu_u \times \nu_v - a E \nu_v \times \nu_{vv}}\nonumber \\
=& -a^3\lambda_v ER( 2 \lambda_v Q + 2F_vD + E\det(\nu, \nu_v ,
\nu_{vv})),
\label{eq:fufvvfuvv}\\
\det(f_u,f_{vv},f_{uu}) =& \inner{a^2 E \lambda_v R \nu}{-aE
\nu_u\times \nu_v} = -a^3\lambda_v E^2 R D\label{eq:fufvvfuu}
\end{align}
on the $u$-axis. By \eqref{eq:fu},\eqref{eq:fufvv},
\eqref{eq:fudotfvv}, \eqref{eq:fufvvfuvv} and \eqref{eq:fufvvfuu},
we have
\begin{align}
\kappa_t(u) =& \, \frac{-a^3\lambda_v ER ( 2 \lambda_v Q + 2F_vD +
E\det(\nu, \nu_v , \nu_{vv}))} {a^4\lambda_v^2 E^2 R^2} (u,0)\nonumber\\
& + \frac{(a^3\lambda_v E^2 R D)(a^2EF_vR)}
{|a^2E^2R|\,|a^4\lambda_v^2 E^2 R^2|} (u,0) \nonumber \\
=& \,
\frac{-2\lambda_v Q - F_vD 
- E\det(\nu, \nu_v ,\nu_{vv})} {a \lambda_v  E R}(u,0).
\label{eq:ktpre}
\end{align}
On the other hand, by the integrability condition,
we have
\begin{equation}\label{eq:integsing}
-a_vE\nu\times\nu_v
+a(\lambda_v\nu_u+F_v\nu\times\nu_u-E_v\nu\times\nu_v
-E\nu\times\nu_{vv})=0
\end{equation}
on the $u$-axis.
Taking the inner product with $\nu_v$, we have
$$\lambda_vQ+F_vD=-E\det(\nu,\nu_v,\nu_{vv}).$$
This equation together with \eqref{eq:ktpre}
proves the assertion.
\qed
\end{proof}

We have the following corollary.
\begin{corollary}
Under the same assumption in
Proposition\/ {\rm \ref{prop:invariants}},
the curve\/ $f(u,0)$
is
a part of a straight line if and only if
$$
\det(\nu_u,\nu_v,\nu)(u,0)=\det(\nu,\nu_v,\nu_{uv})(u,0)=0.
$$
The curve\/ $f(u,0)$ is a plane curve
if and only if
$$\dfrac{ER^2}{\hat{H}\det(\nu,\nu_v,\nu_{uv})^2+D^2R}V
+\hat{H}
\frac{-\lambda_v Q +2E\det(\nu, \nu_v ,\nu_{vv})}
{\lambda_v  E R}\Bigg|_{(u,0)}=0,
$$
where
$$
V=
\dfrac{\det(\nu,\nu_v,\nu_{uv})}{R^{1/2}}
\bigg(-\dfrac{D\hat{H}}{ER}\bigg)_u
+
\dfrac{D\hat{H}}{ER}
\bigg(\dfrac{\det(\nu,\nu_v,\nu_{uv})\hat{H}}
{ER^{3/2}}\bigg)_u.
$$
\end{corollary}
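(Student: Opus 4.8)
The curve $\hat\gamma(u)=f(u,0)$ parametrizes $S(f)$, and since $f$ is a cuspidal edge it is a regular space curve: by \eqref{eq:fu} we have $|f_u|=|a|ER^{1/2}\neq0$ on the $u$-axis, because $a=1/\hat{H}$, $E$ and $R$ are nonzero there. Both assertions concern only the extrinsic geometry of $\hat\gamma$, so the plan is to express its curvature and torsion through the invariants already computed in Proposition \ref{prop:invariants} and to translate the classical characterizations ``line $\Leftrightarrow$ curvature $\equiv0$'' and ``planar $\Leftrightarrow$ torsion $\equiv0$'' into the quantities $D$, $\det(\nu,\nu_v,\nu_{uv})$, etc.

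For the straight-line criterion I would decompose the curvature vector of $\hat\gamma$ into its components tangent and normal to the surface: the normal part has length $|\kappa_\nu|$ and the tangential part has length $|\kappa_s|$ (the geodesic curvature up to sign), and these are orthogonal, so the curvature of $\hat\gamma$ satisfies $\kappa^2=\kappa_s^2+\kappa_\nu^2$. A regular space curve is a piece of a straight line exactly when $\kappa\equiv0$, i.e. when $\kappa_s=\kappa_\nu=0$ on the $u$-axis. Since $\hat{H}$, $E$ and $R$ do not vanish, \eqref{eq:propknu} gives $\kappa_\nu=0\Leftrightarrow D=\det(\nu_u,\nu_v,\nu)=0$ and \eqref{eq:propks} gives $\kappa_s=0\Leftrightarrow\det(\nu,\nu_v,\nu_{uv})=0$, which is the stated condition.

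For the planarity criterion I would use that $\hat\gamma$ lies in a plane if and only if its torsion vanishes identically, and compute $\tau_{\hat\gamma}=\det(f_u,f_{uu},f_{uuu})/|f_u\times f_{uu}|^2$ directly. Here $f_u$ and $f_{uu}$ are already available from \eqref{eq:fu} and \eqref{eq:fuu}, while differentiating \eqref{eq:fuu} once more produces $f_{uuu}$; this is what introduces the $u$-derivatives of the geometric data. In the triple product these derivative terms assemble into exactly the combination $V$, whereas the denominator $|f_u\times f_{uu}|^2$ is a nonzero multiple of $\hat{H}\det(\nu,\nu_v,\nu_{uv})^2+D^2R$, reflecting the relation $\kappa^2=\kappa_s^2+\kappa_\nu^2$ and yielding the prefactor $ER^2/(\hat{H}\det(\nu,\nu_v,\nu_{uv})^2+D^2R)$. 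The remaining, non-differentiated part of the numerator carries the $\nu_{vv}$-terms, and substituting the relation $\lambda_vQ+F_vD=-E\det(\nu,\nu_v,\nu_{vv})$ obtained from the integrability condition \eqref{eq:integsing} reduces it to $\hat{H}(-\lambda_vQ+2E\det(\nu,\nu_v,\nu_{vv}))/(\lambda_vER)$. Setting $\tau_{\hat\gamma}=0$ then gives the displayed equation.

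The main obstacle is this last computation: carrying $f_{uuu}$ and the triple product through while keeping track of the sign factors (the $\sgn$'s in $\kappa_s$ and the sign of $a=1/\hat{H}$ entering $|f_u|$) and the conversion between arc-length and $u$-derivatives, and then invoking \eqref{eq:integsing} at precisely the right step so that the $\nu_{vv}$-terms collapse into the stated form. Once the decomposition $\kappa^2=\kappa_s^2+\kappa_\nu^2$ and the triple-product characterization of planarity are in hand, everything else is bookkeeping built on the expressions already established in the proof of Proposition \ref{prop:invariants}.
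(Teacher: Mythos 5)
Your first assertion is handled exactly as in the paper: both you and the authors invoke $\kappa^2=\kappa_s^2+\kappa_\nu^2$ for the singular curve (the paper cites \cite[Theorem 4.4]{MS} rather than re-deriving the orthogonal decomposition) and then read off the vanishing conditions from \eqref{eq:propknu} and \eqref{eq:propks}, using $\hat{H}\neq0$. That part is fine and essentially identical.

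For the planarity criterion, however, the paper never touches $f_{uuu}$: it quotes the identity $\tau=(\kappa_s\kappa_\nu'-\kappa_s'\kappa_\nu)/(\kappa_s^2+\kappa_\nu^2)+\kappa_t$ from \cite[Theorem 4.4]{MS} and \cite[(5.11)]{iste}, and simply substitutes the four formulas of Proposition \ref{prop:invariants}; the quantity $V$ is then the Wronskian-type numerator $\kappa_s\kappa_\nu'-\kappa_s'\kappa_\nu$ in disguise, and the second summand is the rewriting of $\kappa_t$. Your substitute --- computing $\tau=\det(f_u,f_{uu},f_{uuu})/|f_u\times f_{uu}|^2$ directly --- is a legitimate alternative in principle, but the step where the raw triple product is claimed to ``assemble into exactly the combination $V$'' is precisely the content of the cited torsion identity, and you assert it rather than prove it; this is the entire difficulty of the route you chose. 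Worse, your description of how the remaining terms arise is concretely wrong: on the $u$-axis $\lambda$, $F$ and all their pure $u$-derivatives vanish, so $f_u$, $f_{uu}$ and $f_{uuu}$ involve only $E$, $a=1/\hat{H}$ and the derivatives $\nu_u,\nu_v,\nu_{uv},\nu_{uu},\nu_{uuv},\dots$ --- no $\nu_{vv}$ and no $\lambda_v$ occur anywhere in $\det(f_u,f_{uu},f_{uuu})$. Hence the non-differentiated part of the numerator cannot ``carry the $\nu_{vv}$-terms'' to be collapsed by $\lambda_vQ+F_vD=-E\det(\nu,\nu_v,\nu_{vv})$; in your direct approach the integrability condition \eqref{eq:integsing} would have to be used in the \emph{opposite} direction, trading $Q$-type terms \emph{for} $\lambda_v$ and $\det(\nu,\nu_v,\nu_{vv})$, since those quantities enter the stated formula only through the expression for $\kappa_t$, which the paper obtained from $f_{vv}$ and $f_{uvv}$, not from the singular curve itself. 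So as written the second half has a genuine gap: either carry out the $f_{uuu}$ computation in full with the substitution run in the correct direction, or do as the paper does and cite the torsion formula for cuspidal edges.
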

\begin{proof}
We set $\hat\gamma=f(\gamma)$.
Since the curvature $\kappa$ of $\hat{\gamma}$ as a space curve
satisfies $\kappa^2=\kappa_s^2+\kappa_\nu^2$ (\cite[Theorem 4.4]{MS}),
$\kappa=0$ is equivalent to $\kappa_s=\kappa_\nu=0$.
Since $\hat{H}\ne0$,
we have the first assertion by Proposition \ref{prop:invariants}.

We show the second assertion.
Since the torsion $\tau$ of $\hat{\gamma}$
satisfies
$$
\tau=
\dfrac{\kappa_s\kappa_\nu'-\kappa_s'\kappa_\nu}
{\kappa_s^2+\kappa_\nu^2}+\kappa_t
$$
(see \cite[Theorem 4.4]{MS}, \cite[(5.11)]{iste}),
we have the second assertion by Proposition \ref{prop:invariants}.\qed
\end{proof}

\subsection{Bounded of Gaussian curvature case}

In this section we study the case $f$ has
the bounded Gaussian curvature.
According to \cite[Corollary 3.12]{MSUY}
(see also \cite[Theorem 3.1]{front}),
it is equivalent to $\kappa_\nu=0$ on the set of singular points.
We assume that $(U,u,v)$ is an adapted coordinate system.
By Proposition \ref{prop:invariants},
$\kappa_\nu=0$ if and only if $D=0$. 
Since $f$ is a front, it is equivalent to
that
$\nu_u$ is parallel to $\nu_v$, say $\nu_u=\alpha\nu_v$ on the
$u$-axis.
If $D_v\ne0$ and $\alpha\ne0$, then $\nu$ is a fold.
Here, we shall see the case $\alpha=0$.
This implies that $\nu_u=0$, and by the integrability condition,
$\det(\nu,\nu_v,\nu_{vv})=0$ at the origin.
Then by Proposition \ref{prop:invariants},
$\kappa_t=0$ at the origin.
Furthermore, $\kappa_s=0$ if and only if
$D_v=\eta D=0$ at the origin.

\subsection{Invariants of swallowtail}

In this section, we calculate geometric invariants
of $f$ for the case that $f$ is a singular point of the
$k$-th kind ($k\geq2$).
We take $g,H,\nu$ satisfying the assumption of Theorem \ref{thm:construction},
and fix a function $\lambda$ defined by \eqref{eq:koslambda}.
Let $f$ be the front as in \eqref{eq:kokuken}.
We assume further that
$p$ is an $A_{k+1}$-type singular point of $g$ $(k\geq2)$.
We take a $u$-singular coordinate system centered at $p$.
Then we have the following proposition.
\begin{proposition}\label{prop:invariantssecond}
The invariants\/
$\mu_c$ and\/  $\tau_s$ $(k=2)$
can be calculated as
\begin{align}
\mu_c (p) =&
-\sgn\big(\hat{H}(p)\big) 
\dfrac{G(p)P(p)^{1/2}}
{\lambda_v(p)\hat{H}(p)^2},\label{eq:fmuc}\\
\tau_s (p) =&
\, \dfrac{|\hat{H}(p)|^{1/2}|2F_u(p)\det(\nu,\nu_u,\nu_{uu})(p)-E_{uu}(p)D(p)|}
{|F_u(p)|^{3/2}P(p)^{5/4}}
,\label{eq:ftaus}
\end{align}
where\/ $P$ and\/ $D$ are functions as in\/ \eqref{eq:pqrd}.
\end{proposition}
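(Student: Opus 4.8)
The plan is to work throughout in a $u$-singular coordinate $(u,v)$, so that $S(g)=S(f)=\{v=0\}$, and to feed the expressions \eqref{eq:fufv} for $f_u,f_v$ into the definitions \eqref{eq:muc}, \eqref{eq:taus}. Write $a=1/\hat{H}$. The first step is to read off the vanishing data at $p$. Since $p$ is of the $k$-th kind with $k\ge2$, the null direction there is $\partial_u$, hence $f_u(p)=0$; as $\lambda(p)=0$ this forces $E(p)=F(p)=0$ while $G(p)\ne0$. Because $\lambda$ vanishes along the whole $u$-axis we get $\lambda_u=\lambda_{uu}=0$ there, and differentiating $EG-F^2=\lambda^2$ once in each variable at $p$ yields $E_u(p)=E_v(p)=0$. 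The $A_3$-condition $\psi'(0)\ne0$ becomes $F_u(p)\ne0$, non-degeneracy gives $\lambda_v(p)\ne0$, and the front-pair condition $\eta\nu(p)=\nu_u(p)\ne0$ gives $P(p)\ne0$; these are precisely the quantities in the two formulas.

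For $\tau_s$ I would differentiate $f_u=a\,V$ with $V=\lambda\nu_u+F\nu\times\nu_u-E\nu\times\nu_v$ twice in $u$. As $V(p)=0$, every term carrying an undifferentiated $V$ disappears, giving $f_{uu}(p)=aF_u\,\nu\times\nu_u$; one further differentiation, in which only the $F_u$, $F_{uu}$, $E_{uu}$ terms survive the vanishing of $\lambda,\lambda_u,E,E_u,F$ at $p$, gives
\[
f_{uuu}(p)=(2a_uF_u+aF_{uu})\,\nu\times\nu_u+2aF_u\,\nu\times\nu_{uu}-aE_{uu}\,\nu\times\nu_v .
\]
Using $(\nu\times\nu_u)\times(\nu\times\nu_{uu})=\det(\nu,\nu_u,\nu_{uu})\,\nu$ and $(\nu\times\nu_u)\times(\nu\times\nu_v)=D\,\nu$, the determinant collapses to $\det(f_{uu},f_{uuu},\nu)=a^2F_u\bigl(2F_u\det(\nu,\nu_u,\nu_{uu})-E_{uu}D\bigr)$, while $|f_{uu}|=|a||F_u|P^{1/2}$. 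Dividing and turning the surviving power of $a$ into $|\hat H|^{1/2}$ produces \eqref{eq:ftaus}.

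For $\mu_c$ I would instead compute $f_{uv}=\partial_v f_u$. At $p$, using $\lambda=F=E=0$ together with $E_v(p)=0$, this reduces to $f_{uv}(p)=a\lambda_v\,\nu_u+aF_v\,\nu\times\nu_u$, and since $f_v(p)=aG\,\nu\times\nu_u$ the two needed quantities are immediate: $\inner{f_{uv}}{\nu_u}(p)=a\lambda_vP$ and, via $\nu_u\times(\nu\times\nu_u)=P\nu$, $f_{uv}\times f_v=a^2G\lambda_vP\,\nu$. Substituting these and $|f_v|=|a||G|P^{1/2}$ into $\mu_c=-|f_v|^3\inner{f_{uv}}{\nu_u}/|f_{uv}\times f_v|^2$ and cancelling the powers of $a$, $G$, $P$, $\lambda_v$ yields \eqref{eq:fmuc}.

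The main obstacle is the third-order term behind $\tau_s$: the unreduced $f_{uuu}$ has on the order of a dozen summands, and the clean answer depends on confirming that the vanishing package $\lambda=\lambda_u=E=E_u=F=0$ at $p$ kills all but the three displayed terms. A secondary subtlety is that \eqref{eq:fufv} is attached to the coordinate in which the construction is performed, so one should check that the computation may be run in a $u$-singular coordinate; this is also why the roles of $E$ and $G$ (and of $\partial_u$ versus $\partial_v$) are swapped relative to the cuspidal-edge computation in Proposition \ref{prop:invariants}.
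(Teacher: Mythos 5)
Your proposal is correct and follows essentially the same route as the paper's proof: the same vanishing data $E=F=E_u=E_v=\lambda=\lambda_u=\lambda_{uu}=0$ at $p$ (with $G(p)>0$, $F_u(p)\neq0$, $\lambda_v(p)\neq0$, $P(p)\neq0$), the same expressions $f_{uv}(p)=\frac{1}{\hat H}\big(\lambda_v\nu_u+F_v\,\nu\times\nu_u\big)$ and $f_{uu}(p)=\frac{F_u}{\hat H}\,\nu\times\nu_u$, the same three-term $f_{uuu}(p)$, all substituted into \eqref{eq:muc} and \eqref{eq:taus}; in particular your $\tau_s$ computation reproduces \eqref{eq:ftaus} exactly. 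One caveat: carried out literally, your (and the paper's) substitution for $\mu_c$ gives $-\sgn\big(\hat H(p)\big)\,G(p)P(p)^{1/2}/\lambda_v(p)$, \emph{without} the factor $\hat H(p)^{-2}$ printed in \eqref{eq:fmuc}; since $\mu_c$ as defined in \eqref{eq:muc} is invariant under rescaling $\hat H$ (which merely replaces $f$ by a homothetic copy, as the scale-consistency of \eqref{eq:ftaus} with its $|\hat H|^{1/2}$ prefactor confirms), the $\hat H(p)^2$ in \eqref{eq:fmuc} appears to be a misprint in the statement rather than a flaw in your argument, but you should not assert that the cancellation ``yields \eqref{eq:fmuc}'' without flagging this discrepancy.
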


\begin{proof}
Since $(u,v)$ is a $u$-singular coordinate system,
we see $E=F=E_u=E_v=\lambda_u=0$
at $p$.
Since
\begin{align*}
f_{uv}=
&-\dfrac{\hat{H}_v}{\hat{H}}\Big(\lambda\nu_u+F\nu\times\nu_u-E\nu\times\nu_v\Big)\\
&\hspace{10mm}+\dfrac{1}{\hat{H}}\Big(\lambda_v\nu_u+\lambda\nu_{uv}+F_v\nu\times\nu_{u}
+F(\nu_v\times\nu_u+\nu\times\nu_{uv})\\
&\hspace{20mm}-E_v\nu\times\nu_v-E\nu\times\nu_{vv}\Big),
\end{align*}
thus we have 
$$f_{uv}(p)=\dfrac{1}{\hat{H}(p)}(\lambda_v(p)\nu_u(p)+F_v(p)(\nu\times\nu_u)(p)).$$
Then by \eqref{eq:fufv} and \eqref{eq:muc},
we have \eqref{eq:fmuc}.
Next, by direct calculations, we have
\begin{align*}
f_{uu}(p)&=\dfrac{F_u(p)}{\hat{H}(p)}(\nu\times\nu_u)(p),\\
f_{uuu}(p)&=-\dfrac{2\hat{H}_u(p)F_u(p)}{\hat{H}(p)^2}(\nu\times\nu_u)(p)
+\dfrac{1}{\hat{H}(p)}\Big(F_{uu}(p)(\nu\times\nu_u)(p)\\
&\hspace{30mm}
+2F_u(p)(\nu\times\nu_{uu})(p)-E_{uu}(p)(\nu\times\nu_v)(p)\Big).
\end{align*}
Then by \eqref{eq:taus}, we have \eqref{eq:ftaus}.\qed
\end{proof}


\toukoudel{
\medskip
{\footnotesize
\begin{flushright}
\begin{tabular}{ll}
\begin{tabular}{l}
(Martins)\\
Departamento de Matem{\'a}tica,\\
Instituto de Bioci\^{e}ncias, \\
Letras e Ci\^{e}ncias Exatas,\\
 UNESP - Universidade Estadual Paulista,\\
  C\^{a}mpus de S\~{a}o Jos\'{e} do Rio Preto, \\
SP, Brazil\\
  E-mail: {\tt luciana.martinsO\!\!\!aunesp.br}\\
{}\\
\end{tabular}
&
\begin{tabular}{l}
(Saji)\\
Department of Mathematics,\\
Graduate School of Science, \\
Kobe University, \\
Rokkodai 1-1, Nada, Kobe \\
657-8501, Japan\\
  E-mail: {\tt sajiO\!\!\!amath.kobe-u.ac.jp}\\
\end{tabular}\\
\begin{tabular}{l}
(Teramoto)\\
Institute of Mathematics for Industry, \\
Kyushu University, \\
Motooka 744, Nishi-ku, Fukuoka \\
819-0395, Japan\\
  E-mail: {\tt k-teramotoO\!\!\!aimi.kyushu-u.ac.jp}\\
\end{tabular}
\end{tabular}
\end{flushright}}
}

\begin{thebibliography}{9}
\bibitem{an}
K. Akutagawa and S. Nishikawa,
{\it The Gauss map and spacelike surfaces with prescribed mean
curvature in Minkowski\/ $3$-space},
Tohoku Math. J. {\bf 42} (1990) 67--82.

\bibitem{AGV}
V. I. Arnol'd, S. M. Gusein-Zade and A. N. Varchenko,
{\em Singularities of differentiable maps, Vol. $1$},
Monogr. Math. {\bf 82}, Birkh\"auser Boston, Inc., Boston, MA, 1985.


\bibitem{bw}
J. W. Bruce and J. M. West,
{\it Functions on a crosscap},
Math. Proc. Cambridge Philos. Soc. {\bf 123} (1998), no. 1, 19--39.

\bibitem{fh}
T. Fukui and M. Hasegawa,
{\it Fronts of Whitney umbrella
-a differential geometric approach via blowing up},
J. Singul. {\bf 4} (2012), 35--67.
\bibitem{ft}
T. Fukunaga and M. Takahashi,
{\it Evolutes of fronts in the Euclidean plane},
J. Singul. {\bf 10} (2014), 92--107.

\bibitem{hhnsuy}
M. Hasegawa, A. Honda, K. Naokawa, K. Saji, M. Umehara and K. Yamada,
{\it Intrinsic properties of surfaces with singularities},
Internat. J. Math. {\bf 26} (2015), no. 4, 1540008, 34 pp.

\bibitem{hnuy}
A. Honda, K. Naokawa, M. Umehara and K. Yamada,
{\it Isometric realization of cross caps as formal
power series and its applications},
Hokkaido Math. J. {\bf 48} (2019), 1--44.

\bibitem{hnuy2}
A. Honda, K. Naokawa, M. Umehara and K. Yamada,
{\it Isometric deformations of wave
fronts at non-degenerate singular points},
arXiv:1710.02999.

\bibitem{ishifrontal}
G. Ishikawa,
{\it Recognition Problem of Frontal Singularities},
arXiv:1808.09594.

\bibitem{irrf}
S. Izumiya, M. C. Romero Fuster, M. A. S. Ruas and F. Tari,
{\it Differential geometry from a singularity theory viewpoint},
World Scientific Publishing Co. Pte. Ltd., Hackensack, NJ, 2016.

\bibitem{is}
S. Izumiya and K. Saji,
{\it The mandala of Legendrian dualities for pseudo-spheres
in Lorentz-Minkowski space and ``flat'' spacelike surfaces},
J. Singul. {\bf 2} (2010), 92--127.

\bibitem{ist}
S. Izumiya, K. Saji and M. Takahashi,
{\it Horospherical flat surfaces in hyperbolic\/ $3$-space},
J. Math. Soc. Japan {\bf 62} (2010), no. 3, 789--849.

\bibitem{iste}
S. Izumiya, K. Saji and N. Takeuchi,
{\it Flat surfaces along cuspidal edges},
J. Singul. {\bf 16} (2017), 73--100.

\bibitem{k3}
K. Kenmotsu,
{\it Weierstrass formula for surfaces of prescribed mean curvature},
Math. Ann. {\bf 245} (1979), no. 2, 89--99.

\bibitem{koku}
M. Kokubu,
{\it Application of a unified Kenmotsu-type
formula for surfaces in Euclidean or Lorentzian three-space},
preprint, arXiv:1711.05427.
\bibitem{krsuy}
M.~Kokubu, W.~Rossman, K.~Saji, M.~Umehara, and K.~Yamada,
{\it Singularities of flat fronts in hyperbolic\/ $3$-space},
Pacific J. Math. {\bf 221} (2005), 303--351.

\bibitem{kos}
M. Kossowski,
{\it Realizing a singular first fundamental form as
a nonimmersed surface in Euclidean\/ $3$-space},
J. Geom. {\bf 81} (2004), no. 1-2, 101--113.

\bibitem{m}
M. A. Magid,
{\it Timelike surfaces in Lorentz\/ $3$-space with prescribed
mean curvature and Gauss map},
Hokkaido Math. J. {\bf 19} (1991) 447--464.

\bibitem{MSUY}
L. F. Martins, K. Saji, M. Umehara and K. Yamada,
{\itshape Behavior of Gaussian curvature and
mean curvature near non-degenerate singular
points on wave fronts}, Geometry and Topology of Manifold,
Springer Proc. Math. \& Statistics, 2016, 247--282.

\bibitem{MS}
L. F. Martins and K. Saji,
{\it Geometric invariants of cuspidal edges},
Canad. J. Math. {\bf 68} (2016), no. 2, 445--462.

\bibitem{nuy}
K. Naokawa, M. Umehara and K. Yamada,
{\it Isometric deformations of cuspidal edges},
Tohoku Math. J. (2) {\bf 68} (2016), no. 1, 73--90.

\bibitem{OT} R. Oset Sinha and F. Tari,
{\it Flat geometry of cuspidal edges},
Osaka J. Math. {\bf 55} (2018), no. 3, 393--421.

\bibitem{front}
K. Saji, M. Umehara, and K. Yamada,
{\itshape The geometry of fronts},
Ann. of Math. {\bf 169} (2009), 491--529.

\end{thebibliography}
\end{document}